\def\support{\footnote{This research was partially supported by NSF grant DMS 0802082 }}
\def\Title#1{\begin{center} {\large #1 } \end{center}}
\def\Author#1{\begin{center}{ \sc #1} \end{center}}
\newenvironment{Abstract}{\begin{quotation} \noindent ABSTRACT. }{\end{quotation}}
\numberwithin{equation}{section}
\theoremstyle{plain}
\newtheorem{thm}{Theorem}[section]
\newtheorem{cor}[thm]{Corollary}
\newtheorem{lem}[thm]{Lemma}
\newtheorem{prop}[thm]{Proposition}
\theoremstyle{remark}
\newtheorem{rem}[thm]{Remark}
\theoremstyle{definition}
\begin{document}
\Title{NO FEEDBACK CARD GUESSING FOR TOP TO RANDOM SHUFFLES}
\Author{ Lerna Pehlivan \support}
\begin{Abstract}
Consider $n$ cards that are labeled $1$ through $n$ with $n$ an even integer. The cards are put face down and their ordering starts with card labeled $1$ on top through card labeled $n$ at the bottom. The cards are top to random shuffled $m$ times and placed face down on the table. Starting from the top the cards are guessed without feedback (i.e. whether the guess was correct or false and what the guessed card was) one at a time. For $m > 4n\log n+cn$ we find a guessing strategy that maximizes the expected number of correct guesses. 
\end{Abstract}
\def\thefootnote{\fnsymbol{footnote}}
\setcounter{footnote}{0}
%

\section{Introduction}
Assuming that we have $n$ cards, initially ordered $1$ at top through $n$ at bottom, applying one {\em top to random shuffle} to this deck means taking the top card and placing it back into the deck at position $i$ with probability $w_i$. 
\cite{diaconis92} found 
that, when $w_i = \frac{1}{n}$ for all $i$, the eigenvalues of the transition matrix that is associated with this Markov chain are $0,\frac{1}{n},\ldots,\frac{n-2}{n},1$ and the multiplicities of the eigenvalues that are of the form $\frac{i}{n}$ are the number of permutations with exactly $i$ fixed points.\\
A {\em random to top shuffle}, often referred to as {\em Tsetlin library scheme} or {\em move to front scheme}, is the inverse of a top to random shuffle. Considering that we have $n$ records, the move to front scheme is one of the most famous self-organizing rules that has been considered for searching a particular record with minimum cost. \cite{mccabe} proved the existence of the limiting average number of records to be examined for a request of information.
Phatarfod studied the random to top shuffles on a deck of $n$ cards where the $i^{th}$ card is selected with probability $w_i$. He determined that it is a
Markov chain, but not a random walk on a group and that the stationary distribution of this Markov chain is not uniform. \cite{phatar2} also computed the
$m^{th}$ step transition matrix for this Markov chain by exploiting a connection with the coupon collector's problem. \cite{phatar} proved that the non-zero
eigenvalues of the transition matrix for the random to top shuffle are real, nonnegative, of the form $w_i, w_i+w_j, \ldots, \sum_{i=1}^n w_i$, and the
multiplicity of the eigenvalue $\sum w_i$ with sum over $m$ terms is the number of fixed point free permutations of $N-m$ elements. Independently, \cite{fill96} derived a formula for the $m^{th}$ step transition matrix. Moreover, he measured the distance of the distribution of a random to top shuffle deck from its stationary distribution using the separation distance. \cite{donnelly} and \cite{kapoor} also determined these eigenvalues independently.

For top to random shuffles the spectral decomposition of the transition matrix has some algebraic interpretations. 
\cite{solomon} proved that $A_S= \sum_{D(\pi)=S} \pi$ forms an algebra, where $D(\pi)$ is the descent set of $\pi$. 
\cite{diaconis92} showed that, when $S=\left\{1\right\}$, $A_{\left\{1\right\}}$ generates an $n$-dimensional commutative semisimple subalgebra of the group
algebra, $\mathcal{A}(S_n)$ and $A_{\left\{1\right\}}$ has the same spectral decomposition as the transition matrix for top to random shuffles. \cite{garsia} also proved the same result.
\cite{aldous86}, \cite{diaconis92} and recently \cite{stark} proved that $n\log n+cn$ top to random shuffles is sufficient for the deck to be random.

In this paper we look at the following problem. Consider an ordered deck of $n$ cards labeled $1$ through $n$, where $n$ is an even integer. 
After $m$ top to random shuffles (we consider $w_i=\frac{1}{n}$ for all $i$), a person is asked to guess the cards starting with the top card. 
During the process the guesser receives no information, i.e., neither the type of the card is revealed nor if the guessed card was correct or not. 
We determine the best no feedback guessing strategy, i.e., the strategy that maximizes the expected number of correct guesses.
We prove that, given a deck of $n$ cards, for $n$ even, $c\geq 0$ and $m > 4n\log n+cn$, the best no feedback guessing strategy after $m$ top to random shuffles is to guess card $n-1$ for positions $1$ through $n/2$ and card $n$ for positions $n/2+1$ through $n$. 
By using this best no feedback card guessing strategy we show that after $m>4n\log n+cn$ top to random shuffles the deck is close to randomly distributed deck. 
\cite{ciucu98} studied no feedback card guessing for riffle shuffles.

\section{Calculation of the m step position matrix}
We start by considering the structure of the position matrix for top to random shuffles. The position matrix $P$ is an $n\times n$ matrix whose $(j,k)^{th}$ entry is
the probability that card $j$ moves to position $k$ after one shuffle.
\begin{lem}\label{positionmatrix}
For $1\leq j,k\leq n$, the position matrix has the form
\[
P_{jk} = \begin{cases}
  \frac{1}{n}       & \mbox{ if } j=1 \quad 1\leq k\leq n,
  \\
  \frac{n-(j-1)}{n} & \mbox{ if } j\ne 1 \quad k=j-1,
  \\
  \frac{j-1}{n}     & \mbox{ if } j\ne 1 \quad k=j,
  \\
  0                 & \mbox{ otherwise.}
\end{cases}
\]
\end{lem}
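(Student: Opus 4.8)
\section*{Proof proposal}

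The plan is to compute each entry by conditioning on the random insertion position of the top card. Write $I$ for the position at which the removed top card is reinserted; by definition of the top-to-random shuffle with $w_i = \frac{1}{n}$, the variable $I$ is uniform on $\{1,\ldots,n\}$, so $P(I=i)=\frac{1}{n}$ for every $i$. Since the initial deck is ordered with card $j$ occupying position $j$, the entry $P_{jk}$ is the probability that the card currently sitting at position $j$ lands at position $k$ after one shuffle, and I would organize the argument around the two cases $j=1$ and $j\geq 2$.

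First I would dispose of the top row. When $j=1$ the card in question is precisely the card being reinserted, so it ends at position $k$ exactly when $I=k$. Hence $P_{1k}=P(I=k)=\frac{1}{n}$ for all $1\leq k\leq n$, which is the first case of the formula.

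For $j\geq 2$ I would track the card at position $j$ through the two stages of the shuffle. Removing the top card shifts every card below it up by one, so this card momentarily occupies position $j-1$; reinserting the top card at position $I$ then either leaves it there or pushes it back down by one. Reading off the threshold, the card stays at position $j-1$ precisely when the insertion occurs at or below its current slot, i.e.\ when $I\geq j$, and it returns to position $j$ precisely when $I\leq j-1$. Since these are the only two reachable positions, every other entry of row $j$ vanishes, giving the ``otherwise'' case. Taking probabilities with $I$ uniform then yields $P_{j,j-1}=P(I\geq j)=\frac{n-(j-1)}{n}$ and $P_{j,j}=P(I\leq j-1)=\frac{j-1}{n}$, matching the remaining two cases.

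The only delicate point is the off-by-one bookkeeping in the second case: one must be scrupulous about whether ``insert at position $i$'' places the card above or below the card originally at position $j$, and about the single-step upward shift caused by removing the top card. I would verify the threshold $I\geq j$ versus $I\leq j-1$ on the boundary values $j=2$ and $j=n$ to confirm that the endpoints are handled correctly, after which the general count follows immediately.
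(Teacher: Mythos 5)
Your proof is correct and takes essentially the same approach as the paper's: both condition on the uniform insertion position of the top card (your $I$) and count the favorable positions for each of the cases $j=1$, $k=j-1$, and $k=j$. One small wording slip — ``stays at position $j-1$ when the insertion occurs at or below its current slot'' should read \emph{strictly below} position $j-1$ — but your stated threshold $I\geq j$ and the resulting probabilities $\frac{n-(j-1)}{n}$ and $\frac{j-1}{n}$ are exactly right.
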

\begin{proof}
After one top to random shuffle card $1$ (the top card) can go to any position with equal probability $\frac{1}{n}$.
If card $1$ moves to any position below the $(j-1)^{th}$ position, card $j$ moves to the $(j-1)^{th}$ position.
Since there are $n-(j-1)$ such positions, this implies that card $j$
moves to position $j-1$ with probability $\frac{n-(j-1)}{n}$.
Otherwise, if card $1$ moves to any position above the position of the $j^{th}$ card, card $j$ stays in the same position.
This occurs with probability $\frac{j-1}{n}$.
Since these are the only possible positions for card $j$ after one shuffle, all other positions have zero probability.
\end{proof}
\begin{prop}\label{eigenvector}
Let $v_k{(j)}$ represent the $j^{th}$ component of the $k^{th}$ eigenvector of the position matrix $P$ and
$\lambda_k$ the corresponding eigenvalue for $v_k$. Then
\[
v_k(j) = \left\{ \begin{array}{cll}
(-1)^{n+j} \binom{n-k}{j-k}  & j\geq k        & 1\leq k\leq n-1,
\\
0                            & j<k            & 1\leq k\leq n-1,
\\
1                            & 1\leq j\leq n  & k=n,
\end{array} \right.
\]
and $\lambda_k = \frac{k-1}{n}$ for all $1\leq k\leq n-1$ and $\lambda_n = 1$.
\end{prop}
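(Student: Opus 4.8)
The plan is to verify directly that $P v_k = \lambda_k v_k$ for each $k$, reading the entries of $P$ off Lemma~\ref{positionmatrix}. Since the only nonzero entries of $P$ outside the top row are $P_{j,j-1}=\frac{n-j+1}{n}$ and $P_{j,j}=\frac{j-1}{n}$, the eigenvector equation decouples into two families: the top-row equation $\frac1n\sum_{i=1}^n v_k(i)=\lambda_k v_k(1)$, and for each $j\ge 2$ the two-term relation $\frac{n-j+1}{n}v_k(j-1)+\frac{j-1}{n}v_k(j)=\lambda_k v_k(j)$. I would treat $k=n$ first: there $v_n\equiv 1$, and because $P$ is row-stochastic (each row sums to $1$) both equations reduce to $1=\lambda_n$, giving $\lambda_n=1$ at once.

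For $1\le k\le n-1$ with $\lambda_k=\frac{k-1}{n}$, I would check the rows $j\ge 2$ next. After clearing the factor $\frac1n$ the relation becomes $(n-j+1)\,v_k(j-1)=(k-j)\,v_k(j)$. When $j<k$ both components vanish and it is trivial; when $j=k$ it holds because $v_k(k-1)=0$ and the two surviving terms coincide. For $j>k$ I would substitute $v_k(j)=(-1)^{n+j}\binom{n-k}{j-k}$, cancel the common sign, and reduce the claim to the absorption identity $m\binom{a}{m}=(a-m+1)\binom{a}{m-1}$ with $a=n-k$ and $m=j-k$; this is the one computational step, and it is routine.

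The top-row equation is where the shape of the spectrum shows up, and I expect it to be the only place demanding a moment's care, since the eigenvalue $0$ is \emph{not} a diagonal entry of $P$ and arises precisely from the full first row. For $k\ge 2$ one has $v_k(1)=0$, so the right-hand side is $0$ and I must show $\sum_{i=1}^n v_k(i)=0$; writing the sum as $\sum_{j=k}^n(-1)^{n+j}\binom{n-k}{j-k}$ and substituting $i=j-k$ turns it into $(-1)^{n+k}\sum_{i=0}^{n-k}(-1)^i\binom{n-k}{i}=(-1)^{n+k}(1-1)^{n-k}$, which vanishes because $n-k\ge 1$. For $k=1$ the eigenvalue is $\lambda_1=0$, so the right-hand side is again $0$ and the same alternating-sum computation (now with $n-1\ge 1$ terms) closes the case. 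Assembling the three pieces — the row-stochastic $k=n$ case, the absorption identity for $j\ge 2$, and the alternating binomial sum for the top row — completes the verification; the only genuine content is spotting these two binomial identities and keeping track of the vanishing of $v_k(j)$ for $j<k$.
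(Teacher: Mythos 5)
Your proof is correct and follows essentially the same route as the paper: a direct componentwise verification of $Pv_k=\lambda_k v_k$ using the sparse structure of $P$ from Lemma~\ref{positionmatrix}, with the absorption identity $m\binom{a}{m}=(a-m+1)\binom{a}{m-1}$ doing exactly the work of the paper's single displayed computation. The only difference is that you spell out the cases the paper dismisses as ``trivial'' --- the top-row equation via the alternating sum $(1-1)^{n-k}=0$ and the row-stochastic $k=n$ case --- which is a welcome bit of extra care, since the two-term row relation genuinely does not apply to row $j=1$.
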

\begin{proof}
Assume that $j\geq k$ and $1\leq k\leq n-1$. We have
\[ (Pv_k)_{jk} = \frac{n-j+1}{n}v_k(j-1) + \frac{j-1}{n}v_k(j) = (-1)^{n+j} \frac{k-1}{n} \binom{n-k}{j-k} = \lambda_kv_k(j).\]
The other cases are trivial.
\end{proof}
\noindent Let $E$ be the $n\times n$ matrix whose $k^{th}$ column is $v_k$ for $1\leq k\leq n$.
By Proposition~\ref{eigenvector} it follows that $E^{-1}PE$ is a diagonal matrix $D$, where the $k^{th}$ diagonal element is $\lambda_k$,
the eigenvalue corresponding to $v_k$. To compute the probability of card $j$ being in position $k$ after a certain number of shuffles
we need the powers of the position matrix $P$.
\begin{lem}\label{inversematrix}
The inverse of $E$ is given by
\[
E^{-1}_{jk} = \left\{ \begin{array}{cll}
  v_k{(j)}-\frac{1}{n}\sum_{i=1}^j v_i{(j)}    & 1\leq j<n         & 1\leq k\leq j,
  \\
  \frac{-1}{n}\sum_{i=1}^j v_i{(j)}            & 1\leq j<n         & j<k\leq n,
  \\
  \frac{1}{n}                                  & j=n               & 1\leq k\leq n.
\end{array} \right.
\]
\end{lem}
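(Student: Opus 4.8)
The plan is to verify the formula for $E^{-1}$ directly by confirming that $E E^{-1} = I$ (or equivalently $E^{-1} E = I$), exploiting the lower-triangular structure of $E$ that comes from the vanishing $v_k(j) = 0$ for $j < k$. Since $E$ has columns $v_k$, its $(j,k)$ entry is $E_{jk} = v_k(j)$, which is zero when $j < k$ and equals $1$ in the last column $k=n$. So $E$ is lower triangular except for the all-ones last column. The proposed inverse is likewise essentially lower triangular in its first block with a constant last row, so the product should collapse nicely. I would set up the product $(E E^{-1})_{j\ell} = \sum_{k=1}^n E_{jk} E^{-1}_{k\ell}$ and check it equals $\delta_{j\ell}$, splitting into cases according to where $j$ and $\ell$ fall.

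First I would handle the generic case $1 \le j, \ell < n$. Here the sum over $k$ is constrained by $E_{jk} = v_k(j) = 0$ unless $k \le j$ (and $k=n$), so only finitely many terms survive. I would substitute the three pieces of the $E^{-1}$ formula depending on whether $k \le \ell$, $\ell < k < n$, or $k=n$, and collect terms. The key algebraic identity that should make everything telescope or cancel is a binomial-sum relation of the form $\sum_{i} v_i(j) = \sum_i (-1)^{n+j}\binom{n-i}{j-i}$, which by a standard Vandermonde-type or alternating binomial identity evaluates to something simple (I expect it reduces the $\frac{1}{n}\sum v_i(j)$ correction terms to exactly cancel the off-diagonal contributions). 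The diagonal entry $j=\ell$ should then pick up the leading $v_\ell(j)$ term to give $1$, while the $j \ne \ell$ cases cancel.

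Next I would dispatch the boundary cases: the last row $j = n$, where $E_{nk} = v_k(n)$ combines with the constant row $E^{-1}_{nk} = \frac{1}{n}$, and the last column $\ell = n$, where the all-ones structure of $v_n$ interacts with the row sums of $E^{-1}$. These cases are where the global $\frac{1}{n}$ factors are designed to produce the correct normalization, so I would check them separately rather than folding them into the generic computation.

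The main obstacle I anticipate is establishing the precise value of the alternating binomial sums $\sum_{i=1}^{j} (-1)^{n+j}\binom{n-i}{j-i}$ (and the analogous partial sums appearing when $k$ ranges over a restricted interval). Getting the right closed form — and tracking the signs $(-1)^{n+j}$ carefully through the cancellations — is the delicate part; everything else is bookkeeping once that identity is in hand. I would prove the needed identity by induction on $j$ or by recognizing it as a coefficient extraction from $(1-x)^{n-1}$ or a hockey-stick summation, whichever yields the cleanest sign tracking.
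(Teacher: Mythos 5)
Your plan is sound and, at bottom, it is the paper's method: verify entrywise that the proposed matrix is a one-sided (hence two-sided) inverse of $E$, exploiting $v_k(j)=0$ for $j<k$ and reducing everything to binomial identities. The genuine difference is the order of multiplication, and it changes how much the identities have to carry. The paper checks $E^{-1}E=I$: since the correction $\frac{1}{n}\sum_{i=1}^{j}v_i(j)$ is constant along row $j$ of $E^{-1}$, in that product it factors against the \emph{full column sums} of $E$, namely $\sum_{i=1}^{n}v_k(i)$, which vanish for $k<n$ by the alternating binomial theorem and equal $n$ for $k=n$; the diagonal then collapses in one line to $v_j(j)^2=1$, and the only place a nontrivial identity is needed is the lower off-diagonal case $k<j$ (which the paper dispatches as ``similar steps''), where $\sum_{i=k}^{j}v_i(j)v_k(i)=0$ follows from $\binom{n-i}{j-i}\binom{n-k}{i-k}=\binom{n-k}{j-k}\binom{j-k}{i-k}$ plus the alternating sum over $\binom{j-k}{i-k}$. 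In your direction $EE^{-1}=I$ the corrections never meet vanishing column sums: writing $s_k=\sum_{i=1}^{k}v_i(k)$, the generic entry is $\sum_{k=\ell}^{j}v_k(j)v_\ell(k)-\frac{1}{n}\sum_{k=1}^{j}v_k(j)s_k+\frac{1}{n}$, so you need the convolution identity $\sum_{k=\ell}^{j}v_k(j)v_\ell(k)=\delta_{j\ell}$ \emph{and} the fact $\sum_{k=1}^{j}v_k(j)s_k=1$, whose cleanest proof is that same convolution identity after swapping the order of summation. Relatedly, the sum you single out as the key identity, $\sum_{i=1}^{j}v_i(j)$, is not an alternating sum at all (the sign $(-1)^{n+j}$ does not depend on $i$); it is a hockey-stick sum equal to $(-1)^{n+j}\binom{n}{j-1}$, and its closed form is never needed in the paper's direction (it only shows up in your direction, e.g.\ in the boundary row $j=n$). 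So: correct approach, and your toolbox (Vandermonde-type identities, coefficient extraction) does cover what is needed, but multiplying in the paper's order turns a computation that needs the convolution identity everywhere into an essentially one-line cancellation, which is worth adopting.
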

\begin{proof}
It is enough to show $E^{-1}E = I$. Let $j=k$. We have
{\allowdisplaybreaks
\begin{align*}
  \left(E^{-1}E\right)_{jj} & = \sum_{i=1}^j(v_i(j)-\frac{1}{n}\sum_{k=1}^jv_k(j))v_j(i) + 	
  															\sum_{i=j+1}^n\left(-\frac{1}{n}\sum_{k=1}^jv_k(j)\right)v_j(i)
  \\
  & = \sum_{i=1}^jv_i(j)v_j(i)-\frac{1}{n}\sum_{k=1}^jv_k(j)\sum_{i=1}^nv_j(i) = v_j(j)v_j(j) = 1.
\end{align*}
}%
Similar steps show $\left (E^{-1}E\right)_{jk}=0$ for $j<k$ and $j>k$.
\end{proof}
\begin{cor}\label{pm}
The entries of $P^m$ are 
\[
  P^m_{jk} = \begin{cases}
  \frac{1}{n^{m+1}}\sum_{i=2}^j {(-1)^{j+i+1}(i-1)^m\binom{n-i}{j-i} \binom{n}{i-1}} + \frac{1}{n}     & 1\leq j<k \leq n,
  \\
  \frac{1}{n^{m+1}}\sum_{i=2}^j{(-1)^{j+i+1}(i-1)^m\binom{n-i}{j-i} \binom{n}{i-1}}                    &
  \\
  \qquad + \frac{1}{n^m}\sum_{i=k}^j{(-1)^{j+i}(i-1)^m\binom{n-i}{j-i} \binom{n-k}{i-k}} + \frac{1}{n} & 1\leq k\leq j\leq n.
  \end{cases}
\]
\end{cor}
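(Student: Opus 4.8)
The plan is to read off $P^m$ directly from the spectral data already assembled. Since Proposition~\ref{eigenvector} and Lemma~\ref{inversematrix} give $E^{-1}PE=D$ with $D=\mathrm{diag}(\lambda_1,\dots,\lambda_n)$, we have $P^m=ED^mE^{-1}$, and because the $i$-th column of $E$ is $v_i$ (so $E_{ji}=v_i(j)$) this reads entrywise as
\[
P^m_{jk}=\sum_{i=1}^n v_i(j)\,\lambda_i^m\,E^{-1}_{ik}.
\]
The whole proof is then the substitution of the closed forms for $v_i(j)$, $\lambda_i$, and $E^{-1}_{ik}$ into this single sum, followed by simplification.

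First I would strip off the trivial contributions. Because $\lambda_1=0$, the $i=1$ summand vanishes for every $m\ge 1$; because $\lambda_n=1$, $v_n(j)=1$, and $E^{-1}_{nk}=\tfrac1n$, the $i=n$ summand contributes exactly $\tfrac1n$, which accounts for the additive $\tfrac1n$ in both cases. For the remaining $2\le i\le n-1$ we have $\lambda_i^m=(i-1)^m/n^m$, and since $v_i(j)=(-1)^{n+j}\binom{n-i}{j-i}$ vanishes whenever $i>j$, the range of summation truncates to $i\le j$.

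The one genuine calculation is the eigenvector column sum that $E^{-1}$ carries. Using the hockey-stick identity one verifies that for $1\le i<n$,
\[
\sum_{r=1}^{i} v_r(i)=(-1)^{n+i}\sum_{r=1}^{i}\binom{n-r}{i-r}=(-1)^{n+i}\binom{n}{i-1}.
\]
Inserting this into the branch $-\tfrac1n\sum_r v_r(i)$ of $E^{-1}_{ik}$ and collapsing the parity signs via $(-1)^{n+j}\cdot(-1)\cdot(-1)^{n+i}=(-1)^{j+i+1}$ produces the term $\tfrac{1}{n^{m+1}}\sum_{i=2}^{j}(-1)^{j+i+1}(i-1)^m\binom{n-i}{j-i}\binom{n}{i-1}$, which is common to both cases.

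Finally I would split on the position of $k$ relative to $j$, which is precisely where the two branches of $E^{-1}_{ik}$ in Lemma~\ref{inversematrix} separate. When $k>j$, every index $i\le j$ satisfies $i<k$, so only the $-\tfrac1n\sum_r v_r(i)$ branch is ever used and the first formula results. When $k\le j$, the indices $i$ with $k\le i\le j$ additionally pick up $v_k(i)=(-1)^{n+i}\binom{n-k}{i-k}$; multiplying by $v_i(j)\lambda_i^m$ and reducing the sign to $(-1)^{j+i}$ gives the extra summand $\tfrac{1}{n^m}\sum_{i=k}^{j}(-1)^{j+i}(i-1)^m\binom{n-i}{j-i}\binom{n-k}{i-k}$, yielding the second formula. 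I expect the only real difficulty here to be bookkeeping: correctly deciding which branch of $E^{-1}$ applies for each $i$ and making every parity sign collapse, rather than any conceptual step.
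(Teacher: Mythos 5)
Your proposal is correct and follows exactly the route the paper intends (the paper states the corollary without proof, immediately after establishing $E^{-1}PE=D$): expand $P^m=ED^mE^{-1}$ entrywise, strip off the $\lambda_1=0$ and $\lambda_n=1$ terms, and substitute the closed forms from Proposition~\ref{eigenvector} and Lemma~\ref{inversematrix}, with the hockey-stick identity collapsing the column sums $\sum_{r=1}^{i}v_r(i)$ into $(-1)^{n+i}\binom{n}{i-1}$. The only unremarked detail is the edge case $j=n$, where your sums naturally stop at $i=n-1$ while the stated formula runs to $i=n$; the two agree because the $i=n$ terms of the two sums in the statement cancel each other.
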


\section{The Best Guessing Strategy for Top to Random Shuffles}
We now determine the best guessing strategy for top to random shuffles by using the structure of the $m^{th}$ power of the transition matrix $P$ derived in Corollary \ref{pm}. The final
result is given by Theorem \ref{best_strategy}.

\begin{prop}\label{last_row_increases}
Let $n\geq 4$ be even and $c\geq 0$. For $m > n\log n+cn$, $P^m_{nk}$ is increasing in $k$ for $1 \leq k\leq n$.
\end{prop}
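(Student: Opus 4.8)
The plan is to strip $P^m_{nk}$ down to the single piece that depends on $k$ and then recognise that piece as a count, after which monotonicity is automatic. First I would specialize Corollary~\ref{pm} to $j=n$. Since $\binom{n-i}{j-i}=\binom{n-i}{n-i}=1$, the leading sum $\frac{1}{n^{m+1}}\sum_{i=2}^n(-1)^{n+i+1}(i-1)^m\binom{n}{i-1}$ and the additive $\frac1n$ are both independent of $k$, so the only place $k$ enters is the middle sum. Hence it suffices to study
\[
D_k:=\sum_{i=k}^n(-1)^{n+i}(i-1)^m\binom{n-k}{i-k},
\]
because $P^m_{n,k+1}-P^m_{nk}=\frac{1}{n^m}\bigl(D_{k+1}-D_k\bigr)$, and the claim reduces to $D_{k+1}>D_k$.

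Second, I would rewrite $D_k$ in a manifestly nonnegative form. Substituting $i=k+\ell$ and then reindexing $\ell\mapsto(n-k)-\ell$ turns the alternating sum into
\[
D_k=\sum_{\ell=0}^{n-k}(-1)^{\ell}\binom{n-k}{\ell}(n-1-\ell)^m,
\]
the prefactors collapsing because $(-1)^{n+k}(-1)^{n-k}=1$. By inclusion--exclusion this is exactly the number of functions from $\{1,\dots,m\}$ into a set of $n-1$ symbols whose image contains a prescribed subset $B$ of size $n-k$.

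Third, monotonicity then drops out combinatorially. Choosing the prescribed sets nested, $B_{k+1}\subset B_k$ with $B_k=B_{k+1}\cup\{b\}$, every function covering $B_k$ also covers $B_{k+1}$, so $D_{k+1}\ge D_k$; moreover the difference $D_{k+1}-D_k$ counts precisely the functions that cover $B_{k+1}$ yet miss the extra symbol $b$, i.e.\ the functions into the $(n-2)$-element set of symbols other than $b$ that surject onto $B_{k+1}$. At least one such function exists as soon as $m\ge |B_{k+1}|=n-k-1$, and the largest demand, $n-2$ at $k=1$, is dwarfed by $m>n\log n+cn$. Hence $D_{k+1}-D_k>0$ for every $1\le k\le n-1$, which gives the claim.

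The work in the first two steps is routine bookkeeping; the genuine content --- and the step I would take care over --- is the sign-and-index manipulation recasting the alternating sum as the inclusion--exclusion count, since once that identity is in hand positivity needs no estimation at all. I would also remark that this argument in fact only requires $m\ge n-2$, so the hypothesis $m>n\log n+cn$ functions here merely as the standing mixing-time assumption carried through the paper rather than a sharp requirement for this particular proposition; an author preferring an analytic proof could instead bound $D_{k+1}-D_k$ directly from the alternating sum, but that route is strictly more laborious and buys nothing.
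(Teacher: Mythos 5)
Your proof is correct, and it takes a genuinely different route from the paper's. Both arguments start identically: specialize Corollary~\ref{pm} to $j=n$, observe that only the middle sum depends on $k$, and reduce the claim to $D_{k+1}-D_k>0$; the paper writes this difference as the alternating sum $\sum_{i=k}^{n-1}(-1)^{n+i+1}a_i$ with $a_i=(i-1)^m\frac{n-i}{n-k}\binom{n-k}{i-k}$, which equals $(i-1)^m\binom{n-k-1}{i-k}$, i.e.\ exactly your reindexed quantity. From there the paper argues analytically: it bounds the ratio $\frac{a_{i+1}}{a_i}>\left(1+\frac{1}{n-1}\right)^m\frac{1}{n}>e^c\geq 1$ for $m>n\log n+cn$, so the $a_i$ increase in $i$, and then consecutive terms are paired off (the top term $i=n-1$ carries a plus sign because $n$ is even) to force positivity; that ratio estimate is precisely where the hypothesis on $m$ enters. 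You instead recognize $D_k$, via inclusion--exclusion, as the number of functions from $\{1,\dots,m\}$ into $n-1$ symbols whose image contains a prescribed $(n-k)$-set, so that $D_{k+1}-D_k$ counts functions covering an $(n-k-1)$-set while avoiding one distinguished symbol --- manifestly nonnegative, and strictly positive once $m\geq n-k-1$. Your sign bookkeeping checks out (the prefactor $(-1)^{n+k}(-1)^{n-k}=1$ for either parity of $n$), and your count is consistent with the paper's difference formula, since $D_{k+1}-D_k=\sum_{\ell=0}^{n-k-1}(-1)^\ell\binom{n-k-1}{\ell}(n-2-\ell)^m$. As to what each approach buys: yours is estimate-free, proves the strictly stronger statement that the last row is increasing already for $m\geq n-2$ (so, as you note, the $n\log n+cn$ threshold is not what drives this proposition), and connects naturally to the surjection/Stirling-number identities the paper itself invokes in Theorem~\ref{expectation}; the paper's ratio-of-consecutive-terms technique, while needing the large-$m$ hypothesis, is the uniform template reused nearly verbatim in Propositions~\ref{rows_decrease}, \ref{last_row_max_left}, \ref{center_column_max_lastrow} and \ref{decreasing_differences}, where the summands involve differences of binomial coefficients and admit no comparably clean combinatorial interpretation, so the author pays a suboptimal constant here in exchange for a single method that carries through the whole paper.
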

\begin{proof}
Consider
\[
  P^m_{n(k+1)} - P^m_{nk} = \sum_{i=k}^{n-1}(-1)^{n+i+1}(i-1)^m\frac{n-i}{n-k}\binom{n-k}{i-k},
\]
and define $a_i := (i-1)^m\frac{n-i}{n-k}\binom{n-k}{i-k}$.
\noindent For $k\leq i\leq n-2$, the quotient satisfies
{\allowdisplaybreaks
\begin{align*}
  \frac{a_{i+1}}{a_i} &= \left(1+\frac{1}{i-1}\right)^m \frac{n-i-1}{i+1-k} 
  > \left(1+\frac{1}{n-3}\right)^m \frac{1}{n-1} > \left(1+\frac{1}{n-1}\right)^m\frac{1}{n} .
\end{align*}
}%
The first inequality holds, because 
  $\frac{n-i-1}{i+1-k} > \frac{n-i-1}{i+1} > \frac{1}{n-1}$.

\noindent Let $c \geq 0$. We know that $m\log(1+\frac{1}{n-1}) = m\sum_{t=1}^\infty\frac{1}{t}(\frac{1}{n})^t$. Hence, for $m > n\log n+cn$,
$m\log(1+\frac{1}{n-1}) > \log n + c$.  This implies $\frac{a_{i+1}}{a_i} > \left(1+\frac{1}{n-1}\right)^m \frac{1}{n} > e^c$.  Therefore, $a_i$ is increasing
for $m > n\log n+cn$ and $k\leq i\leq n-1$.  Hence, for all $m > n\log n+cn$, $P^m_{n(k+1)} - P^m_{nk}= \sum_{i=k}^{n-1}(-1)^{i+1}a_i > 0$.
\end{proof}
\begin{prop}\label{rows_decrease}
Let $n\geq 4$ be even and $c\geq 0$. For $m > n\log 2n+cn$ and $1\leq j\leq n-1$ fixed, $P^m_{jk}$ is decreasing in $k$ for $1\leq k\leq n$.
\end{prop}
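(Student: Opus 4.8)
The plan is to isolate the only part of $P^m_{jk}$ that depends on $k$. By Corollary~\ref{pm}, for $1\le k\le j$ we may write $P^m_{jk}=C_j+g(k)$, where $C_j=\frac{1}{n^{m+1}}\sum_{i=2}^j(-1)^{j+i+1}(i-1)^m\binom{n-i}{j-i}\binom{n}{i-1}+\frac1n$ is independent of $k$, and $g(k)=\frac{1}{n^m}\sum_{i=k}^j(-1)^{j+i}(i-1)^m\binom{n-i}{j-i}\binom{n-k}{i-k}$; for $k>j$ one has $P^m_{jk}=C_j$, i.e. $g(k)=0$. Thus the monotonicity of the row reduces to showing $g(k)-g(k+1)>0$ for $1\le k\le j$ (setting $g(j+1):=0$), the remaining consecutive differences being zero. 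The case $j=1$ is immediate, since then $g\equiv 0$ and the row is the constant $\frac1n$.

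To compute the difference I would apply Pascal's rule $\binom{n-k}{i-k}=\binom{n-k-1}{i-k}+\binom{n-k-1}{i-k-1}$ inside $n^m g(k)$. The piece carrying $\binom{n-k-1}{i-k-1}$ has a vanishing $i=k$ term and equals exactly $n^m g(k+1)$, so what survives is
\[
n^m\bigl(g(k)-g(k+1)\bigr)=\sum_{i=k}^{j}(-1)^{j+i}(i-1)^m\binom{n-i}{j-i}\binom{n-k-1}{i-k}=:\sum_{i=k}^{j}(-1)^{j+i}b_i ,
\]
an alternating sum of the nonnegative terms $b_i=(i-1)^m\binom{n-i}{j-i}\binom{n-k-1}{i-k}$, whose top term $i=j$ carries a $+$ sign. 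Re-indexing by $\ell=j-i$ rewrites it as $\sum_{\ell\ge0}(-1)^\ell b_{j-\ell}$, so it suffices to prove that $b_i$ is strictly increasing in $i$ on $k\le i\le j-1$ (the $i=1$ summand, present only when $k=1$, vanishes, so the ratio need only be controlled for $i\ge2$): a finite alternating sum of positive, strictly decreasing terms is positive.

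The remaining work is the ratio estimate, which is the delicate point. A direct computation gives
\[
\frac{b_{i+1}}{b_i}=\Bigl(1+\frac{1}{i-1}\Bigr)^{m}\,\frac{(j-i)(n-i-1)}{(n-i)(i+1-k)} ,
\]
and I would bound the factors separately on $k\le i\le j-1\le n-2$: $\bigl(1+\frac1{i-1}\bigr)^m\ge\bigl(1+\frac1{n-1}\bigr)^m$, then $\frac{n-i-1}{n-i}\ge\frac12$ since $n-i\ge2$, and $\frac{j-i}{i+1-k}\ge\frac{1}{n-2}$ since $j-i\ge1$ and $i+1-k\le n-2$. Multiplying, $\frac{b_{i+1}}{b_i}\ge\bigl(1+\frac1{n-1}\bigr)^m\frac{1}{2(n-2)}$. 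Using $\log\bigl(1+\frac1{n-1}\bigr)=\sum_{t\ge1}\frac{1}{t\,n^t}>\frac1n$, the hypothesis $m>n\log 2n+cn$ gives $m\log\bigl(1+\frac1{n-1}\bigr)>\log 2n+c>\log\bigl(2(n-2)\bigr)+c$, whence $\frac{b_{i+1}}{b_i}>e^{c}\ge1$. The subtlety is entirely in this bounding: the naive estimate $\frac{(j-i)(n-i-1)}{(n-i)(i+1-k)}\ge\frac{1}{n^2}$ would only force $m>2n\log n$, and it is precisely the observation $\frac{n-i-1}{n-i}\ge\frac12$ that trades one factor of $n$ for the constant $2$ and yields the stated threshold $n\log 2n$.
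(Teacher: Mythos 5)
Your proof is correct and takes essentially the same approach as the paper: after your Pascal's-rule telescoping, your alternating sum $\sum_{i=k}^{j}(-1)^{j+i}b_i$ is exactly the paper's Case-3 expression (re-indexed by $i\mapsto i-k$ with the factor $\binom{n-k}{n-j}$ pulled out), your ratio $\frac{b_{i+1}}{b_i}=\bigl(1+\frac{1}{i-1}\bigr)^m\frac{(j-i)(n-i-1)}{(n-i)(i+1-k)}$ coincides with the paper's $\frac{a_{i+1}}{a_i}$, and both arguments close with the same bound $\bigl(1+\frac{1}{n-1}\bigr)^m\frac{1}{2(n-2)}$-type estimate and the identity $\log\bigl(1+\frac{1}{n-1}\bigr)=\sum_{t\geq 1}\frac{1}{t n^t}>\frac1n$ under $m>n\log 2n+cn$. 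The remaining differences are cosmetic: the paper states the difference formula directly and splits the trivial cases $k=j$ and $k>j$ exactly as you do.
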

\begin{proof}

Case 1: $k=j$
 
$P^m_{jj} - P^m_{j(j+1)} = \frac{(j-1)^m}{n^m} \geq 0 \text{ for all } m$.\\
Case 2: $k>j$ 

$P^m_{jk} - P^m_{j(k+1)} = 0 \text{ trivially for all }m.$\\
Case 3: $k<j$

 $ P^m_{jk} - P^m_{j(k+1)} = \frac{1}{n^m}\binom{n-k}{n-j}(-1)^{j+k}\sum_{i=0}^{j-k}(-1)^i(i+k-1)^m\binom{j-k}{i} \left(1-\frac{i}{n-k}\right)$.
 \\
Define 
\begin{equation}\label{eq:jdec}
  a_i : = (i+k-1)^m\binom{j-k}{i}\left(1-\frac{i}{n-k}\right)
\end{equation}
\noindent 
and let $c\geq 0$. We observe below that $a_i$, defined as in (\ref{eq:jdec}), is increasing for $m > n\log 2n+cn$.
For $0\leq i\leq j-k-1$,
{\allowdisplaybreaks
\begin{align*}
  \frac{a_{i+1}}{a_i}
 	& = \left(1+\frac{1}{i+k-1}\right)^m\frac{n-k-i-1}{n-k-i}\frac{j-k-i}{i+1}
	\\
  & \geq \left(1+\frac{1}{n-3}\right)^m \frac{1}{2(n-2)} > \left(1+\frac{1}{n-1}\right)^m \frac{1}{2n}.
\end{align*}
}%
The first inequality holds, since $i\leq j-k-1$, $j\leq n-1$, and $k\geq 1$. 
For $m > n\log 2n+cn$, we get 
$m\log (1+\frac{1}{n-1}) = m\sum_{t=1}^\infty\frac{1}{t}(\frac{1}{n})^t > \frac{m}{n} > \log 2n+c$.
Hence, $\frac{a_{i+1}}{a_i} \geq (1+\frac{1}{n-1})^m\frac{1}{2n} > e^c$ and so $a_i$ is increasing.
We conclude that for $m > n\log 2n+cn$, $\sum_{i=0}^{j-k}(-1)^{i+j+k} a_i >0$ and so $P^m_{jk} - P^m_{j(k+1)} > 0$.
\end{proof}
In the next lemma we state some properties of the binomials used in Proposition~\ref{last_row_max_left}, Proposition~\ref{last_row_max_right} and Corollary \ref{n_max_j}.
\begin{lem}\label{combo-lem-1}
Let $n\geq 10$ be even, then
\begin{itemize}
  \item[(i)] $\frac{1}{n}\binom{n}{i-1}-\binom{n/2-1}{i-1-n/2} > 0$ for all $\frac{n}{2}+1 \leq i\leq n-1$.
  \item[(ii)] $\frac{\frac{1}{n}\binom{n}{i}-\binom{n/2-1}{i-n/2}}{\frac{1}{n}\binom{n}{i-1}-\binom{n/2-1}{i-1-n/2}}
        \text{ is decreasing in $i$ for } \frac{n}{2}+1 \leq i\leq n-2$.
  \item[(iii)] $\frac{\frac{1}{n}\binom{n}{i}-\binom{n/2-1}{i-n/2}}{\frac{1}{n}\binom{n}{i-2}-\binom{n/2-1}{i-2-n/2}}
  \text{ is decreasing in } i \text{ for all } \frac{n}{2}+2 \leq i\leq n-2$.
  \item[(iv)] $\frac{\binom{n-i-1}{j-i-1}-1}{\binom{n-i}{j-i}-1} \text{ is decreasing in }i \text{ for }1 \leq i\leq j-2 \text{ and } 3 \leq j\leq n-1$.
\end{itemize}
\end{lem}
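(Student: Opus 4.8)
The plan is to reduce parts (i)--(iii) to properties of the single sequence $F(m):=\frac{1}{n}\binom{n}{m}-\binom{n/2-1}{m-n/2}$, writing $A(m):=\frac{1}{n}\binom{n}{m}$ and $B(m):=\binom{n/2-1}{m-n/2}$ so that $F=A-B$; with $m=i-1$, part (i) is exactly the claim $F(m)>0$ for $n/2\le m\le n-2$, and part (iv) will be handled separately. For (i) I would show that $A/B$ is strictly decreasing on $n/2\le m\le n-2$ by forming the ratio of successive terms,
\[
\frac{A(m+1)/A(m)}{B(m+1)/B(m)}=\frac{(n-m)(m+1-n/2)}{(m+1)(n-1-m)},
\]
which is $<1$ precisely when $m+1<\frac{n}{2}(n-m)$, and this holds because $n-m\ge 2$. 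Hence $A/B$ attains its minimum at $m=n-2$, where it equals $\frac{n-1}{n-2}>1$, so $A>B$, i.e. $F>0$, throughout. The same computation records that $\rho:=B/A$ is strictly increasing with $0<\rho<1$, which I reuse below.

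Part (ii) is equivalent to the log-concavity $F(i-1)F(i+1)<F(i)^2$; at the right end $i=n-2$ it is trivial since $F(n-1)=1-1=0$, and for $i\le n-3$ all three values are positive by (i). Part (iii) then follows formally: log-concavity makes the ratios $F(m)/F(m-1)$ decreasing, so the two-step quotients $F(i)/F(i-2)=(F(i)/F(i-1))(F(i-1)/F(i-2))$ are products of decreasing positive factors and hence decreasing. Thus the genuine content is the log-concavity of $F$. The obstacle is that $F$ is a \emph{difference} of two log-concave sequences, and such differences need not be log-concave; I would circumvent this with the factorization $F=A\,(1-\rho)$. Since a product of positive log-concave sequences is log-concave and $A(m)=\frac{1}{n}\binom{n}{m}$ is log-concave, it suffices to prove that $1-\rho$ is log-concave. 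I expect this to be the main difficulty; after substituting $\rho(m)=n\binom{n/2-1}{m-n/2}/\binom{n}{m}$ and clearing denominators it reduces to a polynomial inequality in $n$ and $m$, which I would verify for $n\ge 10$.

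For (iv), since $j$ is fixed we have $\binom{n-i}{j-i}=\binom{n-i}{n-j}$, so with $p:=n-j\ge 1$ and $G(i):=\binom{n-i}{p}-1$ the assertion is exactly that $G$ is log-concave in $i$. Writing $M=n-i$ and $a=\binom{M-2}{p}$, $b=\binom{M-1}{p}$, $c=\binom{M}{p}$, I would expand
\[
(b-1)^2-(a-1)(c-1)=(b^2-ac)+(a+c-2b).
\]
The first bracket is nonnegative because $\binom{M}{p}$ is log-concave in $M$ (its successive ratio $\frac{M+1}{M+1-p}$ is decreasing), and the second equals $\binom{M-2}{p-2}\ge 0$ by the second-difference identity $\binom{M}{p}-2\binom{M-1}{p}+\binom{M-2}{p}=\binom{M-2}{p-2}$, valid for $p\ge 2$; strictness holds since this term is positive on the range $i\le j-2$, while the linear case $p=1$ is settled directly by $b^2-ac=1>0$. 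This part is routine and I foresee no real obstruction.

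In short, the architecture places all the genuine difficulty in (ii)---the log-concavity of the difference $F=A-B$---which I would resolve through the log-concavity of $1-B/A$, with (i) supplying both positivity and monotonicity of $B/A$, (iii) following formally from (ii), and (iv) handled cleanly by the log-concavity-plus-convexity identity for $\binom{M}{p}$.
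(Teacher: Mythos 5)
Your parts (i) and (iv) are correct and essentially complete: the ratio-of-ratios computation does show $A/B$ is strictly decreasing with minimum value $\frac{n-1}{n-2}>1$ at $m=n-2$, which gives (i); and in (iv) the decomposition $(b-1)^2-(a-1)(c-1)=(b^2-ac)+(a+c-2b)$, with $b^2-ac\geq 0$ from log-concavity of $\binom{M}{p}$ in $M$ and $a+c-2b=\binom{M-2}{p-2}$ from the second-difference identity, is a clean and valid argument (including your separate treatment of $p=1$). Part (iii) does follow formally from (i) and (ii) exactly as you say. Note that the paper states this lemma without proof, so your proposal must stand entirely on its own.

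It does not, because part (ii) --- which you yourself identify as carrying all the genuine difficulty --- is never proven. The reduction is sound as far as it goes: a product of positive log-concave sequences is log-concave, $A(m)=\frac{1}{n}\binom{n}{m}$ is log-concave, so log-concavity of $F=A(1-\rho)$ would follow from log-concavity of $1-\rho$. But that last claim is exactly the content of (ii), and your treatment of it is the sentence ``after clearing denominators it reduces to a polynomial inequality in $n$ and $m$, which I would verify for $n\ge 10$.'' That is a promissory note, not a proof: the inequality involves two free parameters, so it cannot be ``verified'' by any finite check, and it is not a soft consequence of standard facts. Indeed, writing $F(m)^2-F(m-1)F(m+1)=\bigl[A(m)^2-A(m-1)A(m+1)\bigr]+\bigl[B(m)^2-B(m-1)B(m+1)\bigr]-\bigl[2A(m)B(m)-A(m-1)B(m+1)-A(m+1)B(m-1)\bigr]$, the first two brackets are nonnegative by log-concavity of the binomials, but the third bracket is genuinely positive (hence subtracted): for $n=10$, $m=7$ it equals $42$, against $49.5+20$ from the first two brackets, so positivity holds only by a quantitative margin that must be estimated, not by the kind of term-by-term argument that carried (i) and (iv). Until that estimate is actually carried out for all even $n\ge 10$ and all $m$ in the relevant range, parts (ii) and (iii) of the lemma remain unestablished.
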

\begin{prop}\label{last_row_max_left}
Let $n\geq 10$ be even and $c\geq 0$. For $m > 3n\log n+cn$, we have $P^m_{n(\frac{n}{2}+1)} > P^m_{j(\frac{n}{2}+1)}$ for $1 \leq j\leq \frac{n}{2}$.
\end{prop}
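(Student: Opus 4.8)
The plan is to subtract the two entries via Corollary~\ref{pm}, cancel the common constant, and reduce the difference to a single dominant alternating sum that I can sign using Lemma~\ref{combo-lem-1}(i)--(ii). Since $j\le n/2<n/2+1$, the entry $P^m_{j(\frac n2+1)}$ is given by the $j<k$ branch of Corollary~\ref{pm}, whereas $P^m_{n(\frac n2+1)}$ uses the $k\le j$ branch with $j=n$, in which every $\binom{n-i}{n-i}=1$. Multiplying the difference by $n^{m+1}$ kills the two copies of $\frac1n$, and since $n$ is even (so $(-1)^{n+i+1}=-(-1)^i$ and $(-1)^{n+i}=(-1)^i$) I can merge the two sums making up $P^m_{n(\frac n2+1)}$ term-by-term over the overlap $n/2+1\le i\le n$. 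The merged coefficient of $(-1)^i(i-1)^m$ is $-n\big(\frac1n\binom{n}{i-1}-\binom{n/2-1}{i-1-n/2}\big)$; it vanishes at $i=n$ and is strictly negative for $n/2+1\le i\le n-1$ by Lemma~\ref{combo-lem-1}(i).

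Writing $c_i:=\frac1n\binom{n}{i-1}-\binom{n/2-1}{i-1-n/2}>0$ and $a_i:=(i-1)^m c_i$, the difference becomes
\[
n^{m+1}\big(P^m_{n(\frac n2+1)}-P^m_{j(\frac n2+1)}\big)=n\sum_{i=n/2+1}^{n-1}(-1)^{i+1}a_i-\Sigma_1-\Sigma_B,
\]
where $\Sigma_1=\sum_{i=2}^{n/2}(-1)^i(i-1)^m\binom{n}{i-1}$ is the leftover low part of the row-$n$ sum and $\Sigma_B=\sum_{i=2}^{j}(-1)^{j+i+1}(i-1)^m\binom{n-i}{j-i}\binom{n}{i-1}$ is the row-$j$ contribution. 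The principal sum I would treat exactly as in Propositions~\ref{last_row_increases} and \ref{rows_decrease}: from $\frac{a_{i+1}}{a_i}=\big(1+\frac1{i-1}\big)^m\frac{c_{i+1}}{c_i}$, with the first factor decreasing trivially and $c_{i+1}/c_i$ decreasing by Lemma~\ref{combo-lem-1}(ii), the ratio is least at $i=n-2$, so checking $\frac{a_{n-1}}{a_{n-2}}>1$ there makes $a_i$ increasing on the whole range. Since the top index $n-1$ is odd, the term $+a_{n-1}$ leads, so pairing downward gives $\sum_{i=n/2+1}^{n-1}(-1)^{i+1}a_i\ge a_{n-1}-a_{n-2}$; as $c_{n-1}=\tfrac12$, this principal contribution is positive of order $\tfrac n2(n-2)^m$.

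It remains to see that this dominates the two error sums. Each term of $\Sigma_1$ and $\Sigma_B$ carries $(i-1)^m$ with $i\le n/2$, hence at most $(n/2-1)^m$, times binomial factors summing to $2^{O(n)}$, so $|\Sigma_1|+|\Sigma_B|\le 2^{O(n)}(n/2-1)^m$. Because $\big(\frac{n-2}{n/2-1}\big)^m=2^m$, the principal term exceeds the error by a factor of order $n\,2^m/2^{O(n)}$, which is $>1$ once $m$ is a suitable fixed multiple of $n$---amply ensured by $m>3n\log n+cn$---so $P^m_{n(\frac n2+1)}-P^m_{j(\frac n2+1)}>0$. In fact the estimate can be organized as showing both entries straddle the uniform value: the principal sum beating $\Sigma_1$ gives $P^m_{n(\frac n2+1)}>\frac1n$, and a parallel (easier) estimate on $\Sigma_B$ gives $P^m_{j(\frac n2+1)}<\frac1n$.

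I expect the crux to be the monotonicity of $a_i=(i-1)^m c_i$ all the way up to $i=n-1$. The combinatorial input that $c_{i+1}/c_i$ is decreasing is exactly Lemma~\ref{combo-lem-1}(ii), but locating the binding index $i=n-2$ and extracting the logarithmic threshold is the quantitative heart: $c_{n-1}/c_{n-2}$ is of order $n^{-2}$, which forces $m\gtrsim 2n\log n$ for the ratio test, and the extra room up to $3n\log n+cn$ is what absorbs the factor $e^c$ and the crude $2^{O(n)}$ error bound. The sign bookkeeping---evenness of $n$, and oddness of $n-1$ so the dominant term leads with a plus---is routine but must be tracked carefully to guarantee the principal sum has the correct sign.
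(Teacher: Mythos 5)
Your argument is correct, and it takes a genuinely different route from the paper's. Your decomposition identity checks out: the two $\tfrac1n$'s cancel, $c_n=\tfrac1n\binom{n}{n-1}-\binom{n/2-1}{n/2-1}=0$ removes the $i=n$ term, and the merged tail coefficient is indeed $-nc_i$ with $c_i>0$ by Lemma~\ref{combo-lem-1}(i). The paper instead folds \emph{everything}---the row-$j$ terms, the low-index ($i\le n/2$) row-$n$ terms, and the tail $i\ge n/2+1$---into one piecewise-defined sequence on $2\le i\le n-2$, split into cases by the parity of $j$ (equations (\ref{eq:lrml_even}) and (\ref{eq:lrml_odd})), and verifies in seven separate cases, including the junction indices $i=j-1$ and $i=\tfrac n2-1$, that every consecutive ratio exceeds $(1+\tfrac1{n-1})^m\tfrac1{n^3}>e^c$; positivity of the whole alternating sum then follows by pairing. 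You keep only the tail $n\sum_{i=n/2+1}^{n-1}(-1)^{i+1}(i-1)^m c_i$ as principal part, prove monotonicity there alone---your observation that $a_{i+1}/a_i$ is a product of two positive decreasing factors, so only the binding index $i=n-2$ needs checking, is a clean streamlining of the paper's Case 1.e, which invokes Lemma~\ref{combo-lem-1}(ii) for the same purpose---and dispose of $\Sigma_1,\Sigma_B$ by crude absolute-value bounds of size $2^{O(n)}(n/2-1)^m$, dominated by the principal term $\approx\tfrac n2(n-2)^m$ since $\bigl(\tfrac{n-2}{n/2-1}\bigr)^m=2^m$ and $m>3n\log n$. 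Your route buys a shorter proof with no parity-of-$j$ analysis and no junction estimates; the costs are only bookkeeping that a full write-up must supply: the $2^{O(n)}$ should be made explicit (e.g.\ $|\Sigma_1|+|\Sigma_B|\le 2\cdot 4^n(n/2-1)^m$ suffices, and then $2^m>\tfrac{8}{n}4^n$ follows already from $m>3n$), and the pairing bound $\sum(-1)^{i+1}a_i\ge a_{n-1}-a_{n-2}$ silently uses that an unpaired bottom term occurs exactly when $4\mid n$, in which case its sign $(-1)^{n/2+2}$ is positive---so the bound holds in both parity cases, but the check should appear. What the paper's heavier approach buys in exchange is a fully explicit, alternating-sum-preserving argument with uniform constants, a template it reuses essentially verbatim in Propositions~\ref{last_row_max_right}, \ref{center_column_max_lastrow} and \ref{decreasing_differences}.
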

\begin{proof}
We have
{\allowdisplaybreaks
\begin{align*}
  P^m_{n(\frac{n}{2}+1)} &- P^m_{j(\frac{n}{2}+1)} 
 \\
 & = \frac{1}{n^m}\left(\sum_{i=2}^j (-1)^i(i-1)^m \frac{1}{n}\binom{n}{i-1}\left((-1)^{j+2}\binom{n-i}{j-i} -1\right)\right.
  \\
  & + \sum_{i=j+1}^{n/2}(-1)^{n+i+1}(i-1)^m \frac{1}{n}\binom{n}{i-1}
  \\
  & + \left.\sum_{i=n/2+1}^{n-1}(-1)^{i+1}(i-1)^m \left(\frac{1}{n}\binom{n}{i-1}-\binom{n/2-1}{i-n/2-1}\right)\right).
\end{align*}
}%
\noindent Case 1: If $j$ is even, the equality above simplifies to
\[ \label{eq:lrml_even1}
  P^m_{n(\frac{n}{2}+1)}  - P^m_{j(\frac{n}{2}+1)}
  = \frac{1}{n^m}\sum_{i=2}^{n-2}(-1)^i a_i,
\]
where $a_i$ is defined as
\begin{equation}\label{eq:lrml_even}
a_i : = \begin{cases}
  (i-1)^m \frac{1}{n}\binom{n}{i-1}\left(\binom{n-i}{j-i}-1\right)	 & 2\leq i\leq j-1,
  \\
  i^m \frac{1}{n}\binom{n}{i} 										            & j\leq i\leq \frac{n}{2}-1,
  \\
  i^m\left(\frac{1}{n}\binom{n}{i}-\binom{n/2-1}{i-n/2}\right) & \frac{n}{2} \leq i\leq n-2.
\end{cases}
\end{equation}
\noindent Case 2: If $j$ is odd, the equality above becomes
\[
  P^m_{n(\frac{n}{2}+1)} - P^m_{j(\frac{n}{2}+1)} =
  \frac{1}{n^m}\sum_{i=1}^{n-2}(-1)^ia_{i},
\]
where $a_i$ is defined as
\begin{equation}\label{eq:lrml_odd}
  a_i = \begin{cases}
    i^m \frac{1}{n}\binom{n}{i}\left(\binom{n-i-1}{j-i-1}+1\right)           & 1\leq i\leq j-1,
    \\
    i^m \frac{1}{n}\binom{n}{i}                                          & j\leq i\leq \frac{n}{2}-1,
    \\
    i^m\left(\frac{1}{n}\binom{n}{i}-\binom{n/2-1}{i-n/2}\right) & \frac{n}{2} \leq i\leq n-2.
  \end{cases}
\end{equation}

\noindent Using equation (\ref{eq:lrml_even}) we have the following bounds for $\frac{a_{i+1}}{a_i}$.

\noindent Case 1.a: For $2 \leq i\leq j-2$,
{\allowdisplaybreaks
\begin{align*}
  \frac{a_{i+1}}{a_i} & =
  \left(1+\frac{1}{i-1}\right)^m\frac{n-i+1}{i}\frac{\binom{n-i-1}{j-i-1}-1}{\binom{n-i}{j-i}-1}
  \\
  & \geq \left(1+\frac{2}{n-8}\right)^m\frac{n+8}{n-6}\frac{2(n+2)}{(n+4)(n+1)}
  > \left(1+\frac{2}{n-8}\right)^m\frac{2}{n-6}
  \\
  & > \left(1+\frac{1}{n-1}\right)^m\frac{1}{n^3}.
\end{align*}
}%
The first inequality holds, since $j\leq \frac{n}{2}-1$ and by Lemma~\ref{combo-lem-1} part (iv) $\frac{\binom{n-i-1}{j-i-1}-1}{\binom{n-i}{j-i}-1} > \frac{\binom{n-j+1}{1}-1}{\binom{n-j+2}{2}-1}$.

\noindent Case 1.b: For $i=j-1$,
{\allowdisplaybreaks
\begin{align*}
  \frac{a_{i+1}}{a_i} &= \left(1+\frac{2}{j-2}\right)^m\frac{n-j+2}{j(j-1)}\frac{n-j+1}{n-j}
  \\
  & > \left(1+\frac{4}{n-6}\right)^m\frac{2}{n-2} > \left(1+\frac{1}{n-1}\right)^m\frac{1}{n^3}.
\end{align*}
}%
The first inequality follows, since $j\leq \frac{n}{2}-1$.

\noindent Case 1.c: For $j \leq i\leq \frac{n}{2}-2$,
\[
\frac{a_{i+1}}{a_i} 
= \left(1+\frac{1}{i}\right)^m\frac{n-i}{i+1}
> \left(1+\frac{2}{n-4}\right)^m > \left(1+\frac{1}{n-1}\right)^m\frac{1}{n^3}.
\]

\noindent Case 1.d: For $i=\frac{n}{2}-1$,
{\allowdisplaybreaks
\begin{align*}
  \frac{a_{i+1}}{a_i} & =
  \left(1+\frac{2}{n-2}\right)^m\left(\frac{n+2}{n}-\frac{n}{\binom{n}{n/2-1}}\right)
  \\
 & \geq \left(1+\frac{2}{n-2}\right)^m\frac{2}{n} > \left(1+\frac{1}{n-1}\right)^m\frac{1}{n^3}.
\end{align*}
}%
The first inequality holds, since for all $n\geq 6$, $\binom{n}{n/2-1} > \binom{n}{1}$. 

\noindent Case 1.e: For $\frac{n}{2} \leq i\leq n-3$, we have
{\allowdisplaybreaks
\begin{align*}
  \frac{a_{i+1}}{a_i} & =
   \left(1+\frac{1}{i}\right)^m\frac{\frac{1}{n}\binom{n}{i+1}
  -\binom{n/2-1}{i+1-n/2}}{\frac{1}{n}\binom{n}{i}-\binom{n/2-1}{i-n/2}}
  \geq \left(1+\frac{1}{n-3}\right)^m\frac{12}{(n-2)(n+8)} 
  \\
  & > \left(1+\frac{1}{n-1}\right)^m\frac{1}{n^3}.
\end{align*}
}%
The first inequality holds, by Lemma~\ref{combo-lem-1} part (ii) $\frac{\frac{1}{n}\binom{n}{i+1}-\binom{n/2-1}{i+1-n/2}}{\frac{1}{n}\binom{n}{i}-\binom{n/2-1}{i-n/2}}
\geq \frac{\frac{1}{n}\binom{n}{n-2}-\binom{n/2-1}{n/2-2}}{\frac{1}{n}\binom{n}{n-3}-\binom{n/2-1}{n/2-3}}$.

\noindent Using equation (\ref{eq:lrml_odd}) we have the following bounds for $\frac{a_{i+1}}{a_i}$. 

\noindent Case 2.a: For $1\leq i\leq j-2$, we have
{\allowdisplaybreaks
\begin{align*}
  \frac{a_{i+1}}{a_i} & =
  \left(1+\frac{1}{i}\right)^m\frac{n-i}{i+1} \frac{1+\binom{n-i-2}{j-i-2}}{1+\binom{n-i-1}{j-i-1}} 
 > \left(1+\frac{1}{i}\right)^m\frac{n-i}{n-i-1}\frac{j-i-1}{2(i+1)}
  \\
  & \geq \left(1+\frac{2}{{n-6}}\right)^m\frac{1}{n-4} > \left(1+\frac{1}{n-1}\right)^m\frac{1}{n^3}.
\end{align*}
}%
The first inequality holds, since 
$j\leq \frac{n}{2}-1$ and 
$\frac{1+\binom{n-i-2}{j-i-2}}{1+\binom{n-i-1}{j-i-1}} 
> \frac{\binom{n-i-2}{j-i-2}}{2\binom{n-i-1}{j-i-1}} $.
 The second inequality also follows, since $j\leq \frac{n}{2}-1$.

\noindent Case 2.b: For $i=j-1$, we have
{\allowdisplaybreaks
\begin{align*}
  \frac{a_{i+1}}{a_i} & 
  = \left(1+\frac{1}{j-1}\right)^m\frac{n-j+1}{2j}
  > \left(1+\frac{2}{{n-4}}\right)^m\frac{1}{2} > \left(1+\frac{1}{{n-1}}\right)^m\frac{1}{n^3}.
\end{align*}
}%
The first inequality follows, since $j\leq \frac{n}{2}-1$. 

\noindent Note that the remaining three cases, which are $j\leq i\leq \frac{n}{2}-2 \text{, } i=\frac{n}{2} -1$ and $\frac{n}{2} \leq i\leq n-3$, are identical to the cases 1.c, 1.d and 1.e.

\noindent Let $c\geq0$. 
For $m > 3n\log n+cn$, we get $\frac{m}{n} > 3\log n+c$ and, since $m\log(1+\frac{1}{n-1}) = m\sum_{t=1}^{\infty}\frac{1}{t}(\frac{1}{n})^t$, this implies $m\log(1+\frac{1}{n-1}) > 3\log n+c$. Hence, $(1+\frac{1}{n-1})^m\frac{1}{n^3} > e^c$.
Let us consider the ratio $\frac{a_{i+1}}{a_i}$ for both $j$ even and odd cases.
In all cases above we show $\frac{a_{i+1}}{a_i} > \left(1+\frac{1}{n-1}\right)^m\frac{1}{n^3}$.
Therefore, for $2 \leq i\leq n-2$, $a_{i}$, defined as in (\ref{eq:lrml_even}) and (\ref{eq:lrml_odd}) is increasing for $m > 3n\log n+cn$. We conclude that for $m > 3n\log n+cn$ and $j$ odd, $\sum_{i=1}^{n-2}(-1)^{i}a_i=(-a_1+a_2)+(-a_3+a_4)+ \ldots +(-a_{n-2}+a_{n-1}) >
0$. Similarly, for $j$ even, $\sum_{i=2}^{n-2}(-1)^i a_i = a_2 + \sum_{i=3}^{n-2}(-1)^i a_i > 0$. Hence, $P_{n(\frac{n}{2}+1)}^m-P_{j(\frac{n}{2}+1)}^m > 0$ for both $j$ even and odd.

\end{proof}
\begin{prop}\label{last_row_max_right}
  Let $n\geq 10$ be even and $c\geq 0$. For $m > 4n\log n+cn$, we have $P_{n(\frac{n}{2}+1)}^m > P_{j(\frac{n}{2}+1)}^m$ for $\frac{n}{2}+1\leq j\leq n-1$.
\end{prop}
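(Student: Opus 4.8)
The plan is to follow the template of Proposition~\ref{last_row_max_left}: write the difference $P^m_{n(\frac n2+1)}-P^m_{j(\frac n2+1)}$ as a signed sum $\frac{1}{n^{m+1}}\sum_i\pm a_i$ with all $a_i\ge 0$, prove that $a_i$ is increasing in $i$, and deduce positivity by pairing consecutive terms. The decisive structural difference from the left-hand proposition is that now $k=\frac n2+1\le j$ for \emph{every} $j$ in the range, so by Corollary~\ref{pm} both $P^m_{n(\frac n2+1)}$ and $P^m_{j(\frac n2+1)}$ are given by the diagonal (second) case and each carries the extra middle sum. First I would use that $n$ is even (so $(-1)^{n+i}=(-1)^i$) together with $\binom{n-i}{n-i}=1$, and over the overlap range $\frac n2+1\le i$ merge the first and middle sums into the single quantity $\frac1n\binom{n}{i-1}-\binom{n/2-1}{i-1-n/2}$, which is positive by Lemma~\ref{combo-lem-1}(i); the top term $i=n$ cancels since $\frac1n\binom{n}{n-1}-\binom{n/2-1}{n/2-1}=0$.

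After factoring out $(-1)^{i+1}(i-1)^m$ and the factor $1-(-1)^j\binom{n-i}{j-i}$ common to both entries for $i\le j$, I would define $a_i$ piecewise over three ranges: a ``pure'' range $2\le i\le\frac n2$, where $a_i$ is a multiple of $\binom{n}{i-1}\bigl(\binom{n-i}{j-i}\mp1\bigr)$; a ``middle'' range $\frac n2+1\le i\le j$, where $\binom{n}{i-1}$ is replaced by $n\bigl(\frac1n\binom{n}{i-1}-\binom{n/2-1}{i-1-n/2}\bigr)$; and a ``high'' range $j+1\le i\le n-1$ carrying only the $P^m_{n(\frac n2+1)}$ contribution. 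As in the left case the parity of $j$ decides whether $\binom{n-i}{j-i}$ enters with $-1$ or $+1$. When $j$ is odd the bracket $1+\binom{n-i}{j-i}$ is positive throughout (after a Pascal simplification), so every term carries the single sign $(-1)^{i+1}$ and the ordinary pairing argument of the left proposition applies verbatim. When $j$ is even the bracket $1-\binom{n-i}{j-i}$ flips sign on $i\le j$ and, crucially, the term $i=j$ vanishes because $\binom{n-j}{0}-1=0$; this opens a gap in the alternating sum precisely at $i=j$.

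The core estimate is then $\frac{a_{i+1}}{a_i}>\bigl(1+\frac1{n-1}\bigr)^m\frac1{n^4}$, to be verified range by range and at the internal transitions $i=\frac n2$ and $i=j$, with the combinatorial parts of the ratios handled by Lemma~\ref{combo-lem-1}(ii) and (iv) exactly as in Cases 1.a--1.e above. I expect the genuinely new difficulty to be the even-$j$ gap: because $a_j=0$, after discarding it one is left around the gap with the straddling pair $-a_{j-1}+a_{j+1}$, so positivity forces the \emph{skip-two} inequality $a_{j+1}>a_{j-1}$. Its ratio equals $\bigl(1+\frac2{j-2}\bigr)^m$ times the factor $\frac1{n-j}$ (coming from $\binom{n-j+1}{1}-1$ in $a_{j-1}$) multiplied by the skip-two quotient $\frac{\frac1n\binom{n}{j}-\binom{n/2-1}{j-n/2}}{\frac1n\binom{n}{j-2}-\binom{n/2-1}{j-2-n/2}}$, which is decreasing by Lemma~\ref{combo-lem-1}(iii). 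It is this extra factor, together with the smallness of the combined coefficients near $i=n$, that is absent in the left-hand proposition and degrades the uniform lower bound from $1/n^3$ to $1/n^4$, hence raising the threshold from $3n\log n$ to $4n\log n$; the borderline value $j=\frac n2+1$, which can fall just outside the range of part (iii), I would check directly.

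Finally, for $m>4n\log n+cn$ we have $\frac mn>4\log n+c$, and since $m\log(1+\frac1{n-1})=m\sum_{t\ge1}\frac1t(\frac1n)^t>\frac mn$, this yields $\bigl(1+\frac1{n-1}\bigr)^m\frac1{n^4}>e^c\ge1$. Hence all the single-step and skip-two ratios exceed $1$, so $a_i$ is increasing; grouping the alternating sum into consecutive positive pairs (and, for even $j$, using $a_{j+1}>a_{j-1}$ to absorb the leftover $-a_{j-1}$ across the gap) gives $P^m_{n(\frac n2+1)}-P^m_{j(\frac n2+1)}>0$ for all $\frac n2+1\le j\le n-1$.
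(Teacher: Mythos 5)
Your proposal is correct and takes essentially the same route as the paper: the paper's own proof consists of exactly your three-range decomposition of $P^m_{n(\frac{n}{2}+1)}-P^m_{j(\frac{n}{2}+1)}$ (pure range with $\frac{1}{n}\binom{n}{i-1}$, middle range with the merged coefficient $\frac{1}{n}\binom{n}{i-1}-\binom{n/2-1}{i-1-n/2}$, and high range carrying only the $n$-row contribution) followed by the statement that the rest is analogous to Proposition~\ref{last_row_max_left}, which is precisely the parity split, increasing-$a_i$ ratio bounds via Lemma~\ref{combo-lem-1}, and consecutive pairing (with the even-$j$ gap at $i=j$ handled by what amounts to your skip-two comparison, treated in the paper's template by re-indexing as in its Case 1.b) that you spell out. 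If anything, your sketch gives more detail than the paper does, and your uniform target $\left(1+\frac{1}{n-1}\right)^m\frac{1}{n^4}>e^c$ is consistent with the stated threshold $m>4n\log n+cn$.
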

\begin{proof}
Consider
{\allowdisplaybreaks
\begin{align*}
  P_{n(\frac{n}{2}+1)}^m &- P_{j(\frac{n}{2}+1)}^m 
 \\ & =\frac{1}{n^m}\left(\sum_{i=2}^{n/2}(-1)^i(i-1)^m\binom{n}{i-1}\frac{1}{n}\left((-1)^{j+2}\binom{n-i}{j-i}-1\right)\right.
  \\
  &+ \sum_{i=n/2+1}^j(-1)^i(i-1)^m\left(\frac{1}{n}\binom{n}{i-1}-\binom{n/2-1}{i-1-n/2}\right)
  \\
  & \times \left((-1)^{j+2}\binom{n-i}{j-i}-1\right)
  \\
  &+ \left.\sum_{i=j+1}^{n-1}(-1)^{i+1}(i-1)^m\left(\frac{1}{n}\binom{n}{i-1}-\binom{n/2-1}{i-1-n/2}\right)\right).
\end{align*}
}%
The proof for $P_{n(\frac{n}{2}+1)}^m - P_{j(\frac{n}{2}+1)}^m >0$ for $m > 4n \log n +cn$ and $c \geq 0$ is analog to the proof of Proposition~\ref{last_row_max_left}.
\end{proof}
\begin{cor}\label{n_max_j}
Let $n\geq 10$ be even and $c\geq 0$. For $m > 4n\log n+cn$ and $1\leq j\leq n-1$ fixed, we have $P^m_{nk} > P^m_{jk}$ for all $\frac{n}{2}+1\leq k\leq n$. 
\end{cor}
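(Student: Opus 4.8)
The plan is to obtain this corollary purely by gluing the four preceding results together with a monotonicity argument anchored at the single leftmost position $k=\frac{n}{2}+1$; no new estimate is needed. First I would record the anchor comparison. Propositions~\ref{last_row_max_left} and~\ref{last_row_max_right} between them cover every row index in the range: the former treats $1\leq j\leq \frac{n}{2}$ and the latter treats $\frac{n}{2}+1\leq j\leq n-1$. Both conclusions are valid once $m>4n\log n+cn$, so at $k=\frac{n}{2}+1$ one has the uniform domination
\[
  P^m_{n(\frac{n}{2}+1)} > P^m_{j(\frac{n}{2}+1)}
  \qquad\text{for all } 1\leq j\leq n-1 .
\]

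Next I would propagate this single-point inequality across the full range $\frac{n}{2}+1\leq k\leq n$ using the two opposing monotonicities already established. By Proposition~\ref{last_row_increases} the bottom row $P^m_{nk}$ is increasing in $k$, so $P^m_{nk}\geq P^m_{n(\frac{n}{2}+1)}$ for every $k\geq \frac{n}{2}+1$; by Proposition~\ref{rows_decrease} each fixed row $P^m_{jk}$ with $1\leq j\leq n-1$ is decreasing in $k$, so $P^m_{jk}\leq P^m_{j(\frac{n}{2}+1)}$ for every $k\geq \frac{n}{2}+1$. Concatenating these two facts with the anchor comparison yields, for all $\frac{n}{2}+1\leq k\leq n$ and $1\leq j\leq n-1$,
\[
  P^m_{nk} \;\geq\; P^m_{n(\frac{n}{2}+1)} \;>\; P^m_{j(\frac{n}{2}+1)} \;\geq\; P^m_{jk},
\]
which is exactly the assertion.

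The only step requiring attention --- and the closest thing to an obstacle --- is the hypothesis bookkeeping, since each cited proposition carries its own lower bound on $m$. One must confirm that the standing assumption $m>4n\log n+cn$ simultaneously implies the weaker thresholds $m>n\log n+cn$ (for Proposition~\ref{last_row_increases}), $m>n\log 2n+cn$ (for Proposition~\ref{rows_decrease}), and $m>3n\log n+cn$ (for Proposition~\ref{last_row_max_left}). For $n\geq 10$ this is immediate from $4n\log n>3n\log n>n\log n$ together with $4n\log n>n\log 2+n\log n=n\log 2n$. Since all the genuine analytic work has already been absorbed into the preceding propositions, nothing further is needed.
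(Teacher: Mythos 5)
Your proof is correct and follows exactly the paper's approach: the paper's own proof simply cites Propositions~\ref{last_row_increases}, \ref{rows_decrease}, \ref{last_row_max_left}, and \ref{last_row_max_right}, and your anchoring-plus-monotonicity chain $P^m_{nk} \geq P^m_{n(\frac{n}{2}+1)} > P^m_{j(\frac{n}{2}+1)} \geq P^m_{jk}$ is precisely the intended way to combine them. Your threshold bookkeeping (checking that $m > 4n\log n + cn$ implies each proposition's weaker hypothesis) is a useful explicit addition that the paper leaves implicit.
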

\begin{proof}
This result follows from Proposition~\ref{last_row_increases}, Proposition~\ref{rows_decrease}, Proposition~\ref{last_row_max_left} and Proposition~\ref{last_row_max_right}.
\end{proof}
The next lemma states some properties of the binomials used in Proposition~\ref{center_column_max_lastrow}, Proposition~\ref{decreasing_differences}, Proposition~\ref{center_column_max} and Corollary \ref{n-1_max_n}.
\begin{lem}\label{combo-lem-2}
Let $n\geq 8$ be even, then
\begin{itemize}
\item[(i)] $\frac{1}{n}\binom{n}{i-1}-\binom{n/2}{i-n/2} > 0 \text{ for } \frac{n}{2}\leq i\leq n-2$.
\item[(ii)] $\frac{\frac{1}{n}\binom{n}{i}-\binom{n/2}{i+1-n/2}}{\frac{1}{n}\binom{n}{i-1}-\binom{n/2}{i-n/2}}
\text{ is decreasing in $i$ for }\frac{n}{2} \leq i\leq n-3$.
\item[(iii)] $\frac{\frac{1}{n}\binom{n}{i}-\binom{n/2}{i+1-n/2}}{\frac{1}{n}\binom{n}{i-2}-\binom{n/2}{i-1-n/2}}
\text{ is decreasing in $i$ for }\frac{n}{2}+1 \leq i\leq n-3$.
\item[(iv)] $\frac{\binom{n-i-1}{j-i-1}-(n-i-1)}{\binom{n-i}{j-i}-(n-i)} \text{ is decreasing in $i$ for } 2 \leq i\leq j-3 \text{ and } 5 \leq j\leq n-2$.
\end{itemize}
\end{lem}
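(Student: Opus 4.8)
The plan is to route all four parts through two auxiliary sequences. For parts (i)--(iii) set
\[
  B_i := \frac{1}{n}\binom{n}{i-1}-\binom{n/2}{i-n/2},
\]
so that part (i) is exactly the positivity $B_i>0$, part (ii) reads ``$B_{i+1}/B_i$ is decreasing,'' and part (iii) reads ``$B_{i+1}/B_{i-1}$ is decreasing.'' For part (iv) set $C_i:=\binom{n-i}{j-i}-(n-i)$, so that the quotient there is $C_{i+1}/C_i$ and the claim is again that this ratio decreases. Thus (i) is a positivity statement, while (ii), (iii), (iv) are all log-concavity statements for $B_i$ and $C_i$.

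For part (i) I would work with $g(i):=\binom{n}{i-1}/\binom{n/2}{i-n/2}$ and show $g(i)>n$ on $\frac{n}{2}\le i\le n-2$. A short computation gives
\[
  \frac{g(i+1)}{g(i)}=\frac{(n-i+1)(i+1-n/2)}{i(n-i)},
\]
and writing $i=\frac{n}{2}+r$ one checks that the denominator exceeds the numerator by $\frac{n}{2}\bigl(\frac{n}{2}-r-1\bigr)-1>0$ on the whole range, so $g$ is strictly decreasing. Its minimum is therefore attained at $i=n-2$, where $g(n-2)=\binom{n}{3}/\binom{n/2}{2}=\frac{4(n-1)}{3}$, which exceeds $n$ exactly when $n>4$. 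This settles (i).

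For part (ii) the claim that $B_{i+1}/B_i$ decreases is equivalent to the log-concavity $B_\ell^2>B_{\ell-1}B_{\ell+1}$ for $\frac{n}{2}+1\le \ell\le n-3$, positivity of every $B$ involved coming from (i). Writing $B_i=P_i-Q_i$ with $P_i=\frac{1}{n}\binom{n}{i-1}$ and $Q_i=\binom{n/2}{i-n/2}$, both log-concave in $i$, I would expand
\[
  B_\ell^2-B_{\ell-1}B_{\ell+1}=(P_\ell^2-P_{\ell-1}P_{\ell+1})+(Q_\ell^2-Q_{\ell-1}Q_{\ell+1})+\bigl(P_{\ell-1}Q_{\ell+1}+P_{\ell+1}Q_{\ell-1}-2P_\ell Q_\ell\bigr).
\]
The first two parenthesized terms are nonnegative by log-concavity of $P$ and $Q$; the cross term may be negative (it already is, e.g., for $n=10,\ \ell=6$), so the crux is to show it is dominated by the two gaps. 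Substituting the explicit binomial values, clearing factorials and setting $r=\ell-\frac{n}{2}$ reduces the expression to a single polynomial inequality in $r$ and $n$ that I would verify for even $n\ge 8$ on the stated range. Part (iii) then needs no new work: since $B_{i+1}/B_{i-1}=(B_{i+1}/B_i)(B_i/B_{i-1})$ is a product of two positive factors, each decreasing by (ii), it is itself decreasing on the shorter range $\frac{n}{2}+1\le i\le n-3$.

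Part (iv) is the same phenomenon for $C_i$. Putting $m=n-i$ and $p=n-j$ one has $C_i=\binom{m}{p}-m$, and since on the range $2\le i\le j-3$, $5\le j\le n-2$ one has $m\ge p+3$ and $p\ge 2$, positivity $C_i>0$ is immediate; the claim is then log-concavity of $\binom{m}{p}-m$ in $m$. The analogous expansion produces $\bigl(\binom{m}{p}\text{-gap}\bigr)+1+\bigl(\text{cross term}\bigr)$, where the $+1$ comes from $m^2-(m-1)(m+1)$ and the binomial gap is nonnegative because $\binom{m}{p}$ is log-concave in $m$. Again I would reduce the cross term to an explicit polynomial inequality in $m$ and $p$, using the binomial gap together with the $+1$ slack to absorb it. The main obstacle throughout is that log-concavity is \emph{not} preserved under subtraction, so in (ii) and (iv) the naive bound on the cross term fails and one must genuinely exploit the positive slack coming from the log-concavity gaps of the two pieces (and, in (iv), the extra $+1$); reducing each of these to a clean polynomial inequality valid for all even $n\ge 8$ (and all admissible $j$) is where the real bookkeeping lies, everything else being the routine ratio-and-endpoint argument of part (i).
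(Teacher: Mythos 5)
The paper states this lemma without proof (it is asserted and then used), so there is no argument of the author's to compare yours against; your proposal has to stand on its own, and only part of it does. Part (i) is complete and correct: the ratio $\frac{g(i+1)}{g(i)}=\frac{(n-i+1)(i+1-n/2)}{i(n-i)}$, the gap $\frac{n}{2}\left(\frac{n}{2}-r-1\right)-1>0$, and the endpoint value $g(n-2)=\binom{n}{3}/\binom{n/2}{2}=\frac{4(n-1)}{3}>n$ all check out, and the deduction of (iii) from (i) and (ii) (a product of two positive decreasing factors is decreasing, with all indices covered by (i)) is also sound. The genuine gap is in (ii) and (iv): the inequality you yourself identify as the crux --- that the two log-concavity gaps dominate the negative cross term --- is never proved, only promised, and for (ii) the mechanism you propose cannot work as stated. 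Writing $p_{\pm}=P_{\ell\pm1}/P_\ell$, $q_{\pm}=Q_{\ell\pm1}/Q_\ell$, $t=P_\ell/Q_\ell$, and dividing $B_\ell^2-B_{\ell-1}B_{\ell+1}$ by $Q_\ell^2$, one gets the quadratic $at^2+dt+c$ with
\[
a=1-p_-p_+=\frac{n+1}{\ell(n-\ell+2)},\qquad c=1-q_-q_+=\frac{n/2+1}{(n-\ell+1)(\ell+1-n/2)},\qquad d=p_-q_++p_+q_--2.
\]
Here $a,c,d$ are rational in $(\ell,n)$, but $t=\frac{1}{n}\binom{n}{\ell-1}/\binom{n/2}{\ell-n/2}$ is a ratio of factorial products of unbounded length: no amount of ``clearing factorials'' reduces the positivity of $at^2+dt+c$ to ``a single polynomial inequality in $r$ and $n$,'' because $t$ survives every normalization. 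An extra idea is required, e.g.\ the case split: if $d\geq 0$ the quadratic is positive for all $t>0$ and nothing more is needed; if $d<0$, then since $at^2+c\geq 2t\sqrt{ac}$ it suffices to prove the \emph{rational} inequality $d^2<4ac$. Neither branch holds uniformly on $\frac{n}{2}+1\leq\ell\leq n-3$: one finds $d<0$ near the right end (e.g.\ $\ell=n-3$), while in the bulk $\ell=\alpha n$ one has $d\to\frac{1}{4\alpha(\alpha-1/2)}>0$ and $4ac=\Theta(n^{-2})$, so $d^2\leq 4ac$ fails there. So the case distinction is unavoidable, and the surviving rational inequality must then be established for all even $n\geq 8$ --- none of which appears in your proposal, and ``I would verify'' cannot substitute for it on an infinite range of $n$.

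Part (iv), by contrast, does close along exactly the lines you sketch, but you stopped one computation short of settling it. With $R_m=\binom{m}{p}$, $p=n-j$, the cross term is computable in closed form:
\[
(m+1)R_{m-1}+(m-1)R_{m+1}-2mR_m=\binom{m-1}{p-1}\,\frac{m(p-3)+p-1}{m-p+1},
\]
which is nonnegative whenever $p\geq 3$ --- so for $j\leq n-3$ neither the binomial gap nor your ``$+1$ slack'' is needed at all --- while for $p=2$ (i.e.\ $j=n-2$) one has $D_m=\binom{m}{2}-m=\frac{m(m-3)}{2}$ and $D_m^2-D_{m-1}D_{m+1}=\frac{m^2-3m+4}{2}>0$ is a one-line polynomial check (here the binomial gap is essential, since the cross term equals $-(m-1)$ and the $+1$ alone does not absorb it). As submitted, then, your proposal proves (i), correctly reduces (iii) to (ii), but leaves the actual content of (ii) and (iv) --- which is the substance of the lemma --- unestablished.
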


\begin{prop}\label{center_column_max_lastrow}
  Let $n\geq 8$ be even and $c\geq 0$. For $m > n\log n+ cn$, we have $P^m_{(n-1)\frac{n}{2}} > P^m_{n\frac{n}{2}}$.
\end{prop}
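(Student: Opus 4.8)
The plan is to apply the second ($k\le j$) branch of Corollary~\ref{pm} to both entries, which is legitimate since $\frac{n}{2}\le n-1<n$. First I would simplify the binomials: for $j=n$ every factor $\binom{n-i}{n-i}=1$, while for $j=n-1$ one has $\binom{n-i}{n-1-i}=n-i$. Subtracting, the additive $\frac1n$ terms cancel, and after collecting the two geometric-type sums the common factor $\frac{1}{n^{m}}$ can be pulled out; the isolated terms coming from the $i=n$ endpoints of the two $j=n$ sums turn out to cancel against one another. Since $n$ is even, $(-1)^{n+i}=(-1)^i$, and I expect the difference to collapse to a single alternating sum
\[
P^m_{(n-1)\frac{n}{2}} - P^m_{n\frac{n}{2}} = \frac{1}{n^m}\sum_{i=2}^{n-1}(-1)^i a_i,
\]
where
\[
a_i = \begin{cases}
(i-1)^m(n-i+1)\frac{1}{n}\binom{n}{i-1} & 2\le i\le \frac{n}{2}-1,\\
(i-1)^m(n-i+1)\left(\frac{1}{n}\binom{n}{i-1}-\binom{n/2}{i-n/2}\right) & \frac{n}{2}\le i\le n-1.
\end{cases}
\]

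Next I would pin down the signs of the coefficients. For $2\le i\le \frac{n}{2}-1$ positivity of $a_i$ is clear, and Lemma~\ref{combo-lem-2}(i) gives $\frac1n\binom{n}{i-1}-\binom{n/2}{i-n/2}>0$ for $\frac{n}{2}\le i\le n-2$, so $a_i>0$ on that range too. The endpoint $i=n-1$ is exceptional: there $\frac1n\binom{n}{n-2}-\binom{n/2}{n/2-1}=\frac{n-1}{2}-\frac{n}{2}=-\frac12<0$, hence $a_{n-1}=-(n-2)^m<0$. Because $n-1$ is odd, this term enters as $(-1)^{n-1}a_{n-1}=-a_{n-1}=(n-2)^m>0$, so it only helps; I would split it off and prove positivity of $\sum_{i=2}^{n-2}(-1)^i a_i$ separately.

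The core step, exactly as in Proposition~\ref{last_row_max_left}, is to show $a_i$ is increasing on $2\le i\le n-2$ once $m>n\log n+cn$. I would bound $\frac{a_{i+1}}{a_i}$ regime by regime. On $2\le i\le \frac{n}{2}-2$ the binomials telescope to $\frac{a_{i+1}}{a_i}=\left(1+\frac{1}{i-1}\right)^m\frac{n-i}{i}$, which is comfortably large. At the junction $i=\frac{n}{2}-1$ the ratio mixes the two cases and is handled directly. On $\frac{n}{2}\le i\le n-3$ the binomial part is $\frac{\frac1n\binom{n}{i}-\binom{n/2}{i+1-n/2}}{\frac1n\binom{n}{i-1}-\binom{n/2}{i-n/2}}$, whose minimum over the range is attained at $i=n-3$ by Lemma~\ref{combo-lem-2}(ii); evaluating there yields a bound of order $\frac1n$, so that $\frac{a_{i+1}}{a_i}>\left(1+\frac1{n-1}\right)^m\frac1n$ in every regime. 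Finally $m>n\log n+cn$ forces $m\log\!\left(1+\frac1{n-1}\right)=m\sum_{t=1}^{\infty}\frac1t\left(\frac1n\right)^t>\frac mn>\log n+c$, hence $\left(1+\frac1{n-1}\right)^m\frac1n>e^c\ge1$ and $a_i$ is increasing.

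With monotonicity in hand I would conclude by the usual pairing: $\sum_{i=2}^{n-2}(-1)^i a_i = a_2+(-a_3+a_4)+\cdots+(-a_{n-3}+a_{n-2})>0$, each bracket being positive, and adding the leftover $-a_{n-1}>0$ gives $P^m_{(n-1)\frac{n}{2}} - P^m_{n\frac{n}{2}}>0$. I expect the main obstacle to be the bookkeeping in the first step—correctly merging the four sums, verifying the $i=n$ endpoint cancellation, and tracking the index shift that produces the factor $(n-i+1)$—together with the junction ratio at $i=\frac{n}{2}-1$; the regime-2 estimate, though technical, is precisely the content packaged in Lemma~\ref{combo-lem-2}(ii) and should go through as in Proposition~\ref{last_row_max_left}.
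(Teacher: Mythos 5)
Your proposal is correct and takes essentially the same route as the paper's proof: the same decomposition into $\frac{1}{n^m}\bigl(\sum_{i=2}^{n-2}(-1)^i a_i+(n-2)^m\bigr)$ with $a_i=(i-1)^m(n-i+1)\frac{1}{n}\binom{n}{i-1}$ below $\frac{n}{2}$ and the $\binom{n/2}{i-n/2}$ correction above, the same three-regime ratio bounds (telescoping binomials, the junction at $i=\frac{n}{2}-1$, and Lemma~\ref{combo-lem-2}(ii) on $\frac{n}{2}\leq i\leq n-3$), the same threshold $\left(1+\frac{1}{n-1}\right)^m\frac{1}{n}>e^c$, and the same pairing conclusion. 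The only cosmetic difference is that you package the paper's explicit boundary term $(n-2)^m$ as the $i=n-1$ term of an extended sum, which you correctly verify has the right sign.
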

\begin{proof}
{\allowdisplaybreaks
\begin{align*}
  P^m_{(n-1)\frac{n}{2}} &- P^m_{n\frac{n}{2}} 
  \\
  & = \frac{1}{n^m} \left(\sum_{i=2}^{n/2-1}(-1)^i(i-1)^m\frac{n-i+1}{n}\binom{n}{i-1}\right.
  \\
  & + \left. \sum_{i=n/2}^{n-2}(-1)^i(i-1)^m(n-i+1) \left(\frac{1}{n}\binom{n}{i-1}-\binom{n/2}{i-n/2}\right) +(n-2)^m \right)
  \\
  & = \frac{1}{n^m}\left(\sum_{i=2}^{n-2}(-1)^ia_i+(n-2)^m\right),
\end{align*}
}%
where $a_i$ is defined as
\begin{equation}\label{eq:n-1vsnmiddle}
a_i=\begin{cases}
 (i-1)^m\frac{n-i+1}{n}\binom{n}{i-1} & 2\leq i\leq \frac{n}{2}-1,
 \\
 (i-1)^m(n-i+1)\left(\frac{1}{n}\binom{n}{i-1}-\binom{n/2}{i-n/2}\right) & \frac{n}{2} \leq i\leq n-2.
\end{cases}
\end{equation}

\noindent Using equation (\ref{eq:n-1vsnmiddle}) we get the following bounds for $\frac{a_{i+1}}{a_i}$.

\noindent Case 1.a: For $2\leq i\leq \frac{n}{2}-2$, we have
{\allowdisplaybreaks
\begin{align*}
  \frac{a_{i+1}}{a_{i}}& = 
   \left(1+\frac{1}{i-1}\right)^m\frac{n}{i}-1
  >\left(1+\frac{2}{n-6}\right)^m
  > \left(1+\frac{1}{n-1}\right)^m \frac{1}{n}.
\end{align*}
}%

\noindent Case 1.b: For $i=\frac{n}{2}-1$, we have
{\allowdisplaybreaks
\begin{align*}
  \frac{a_{i+1}}{a_i}
  & 
  = \left(1+\frac{2}{n-4}\right)^m\frac{n+2}{n+4}\frac{\frac{1}{n}\binom{n}{n/2-1}-1}{\frac{1}{n}\binom{n}{n/2-2}}
  >\left(1+\frac{2}{n-4}\right)^m \frac{1}{n}
  \\
  & > \left(1+\frac{1}{n-1}\right)^m \frac{1}{n}.
\end{align*}
}%
The first inequality follows from $\frac{\frac{1}{n}\binom{n}{n/2-1}-1}{\frac{1}{n}\binom{n}{n/2-2}} >
\frac{\binom{n}{n/2-1}}{\binom{n}{n/2-2}}-\frac{n}{\binom{n}{1}}= \frac{6}{(n-2)}$ for all $n\geq 6$.

\noindent Case 1.c: For $\frac{n}{2} \leq i\leq n-3$, we have
{\allowdisplaybreaks
\begin{align*}
  \frac{a_{i+1}}{a_i} &= \left(1+\frac{1}{i-1}\right)^m \frac{n-i}{n-i+1}
  \frac{\frac{1}{n}\binom{n}{i}-\binom{n/2}{i+1-n/2}}{\frac{1}{n}\binom{n}{i-1} -\binom{n/2}{i-n/2}}
  \\
	& >\left(1+\frac{1}{n-4}\right)^m \frac{3(n-4)}{2(n^2-4n+6)}
  >\left(1+\frac{1}{n-1}\right)^m \frac{1}{n}.
\end{align*}
}%
The first inequality holds by Lemma~\ref{combo-lem-2} part (ii),
$\frac{\frac{1}{n}\binom{n}{i}-\binom{n/2}{i+1-n/2}}{\frac{1}{n}\binom{n}{i-1} -\binom{n/2}{i-n/2}}
\geq \frac{\frac{1}{n}\binom{n}{n-3}-\binom{n/2}{n/2-2}}{\frac{1}{n}\binom{n}{n-4} -\binom{n/2}{n/2-3}}$. 
The second inequality holds for all $n\geq 6$. 

\noindent Let $c\geq 0$. For $m > n\log n+cn$, we get $\frac{m}{n} > \log n+c$ and, 
since $m\log(1+\frac{1}{n-1}) = m\sum_{t=1}^\infty\frac{1}{t}(\frac{1}{n})^t$ 
this implies $m\log(1+\frac{1}{n-1}) > \log n+c$. 
Hence, $(1+\frac{1}{n-1})^m \frac{1}{n} > e^c$. 
Above we prove that $\frac{a_{i+1}}{a_i} > \left(1+\frac{1}{n-1}\right)^m \frac{1}{n}$.
Therefore, for $2\leq i\leq n-2$, $a_i$, defined as in (\ref{eq:n-1vsnmiddle}) is increasing 
and so, $P^m_{(n-1)\frac{n}{2}}-P^m_{n\frac{n}{2}} > 0$ for all $m > n\log n+cn$.
\end{proof}
\begin{cor}\label{n-1_max_n}
  Let $n\geq 8$ be even and $c\geq 0$. For $m>n\log 2n+cn$, we have $P^m_{(n-1)k} > P^m_{nk}$ for $1\leq k\leq \frac{n}{2}$.
\end{cor}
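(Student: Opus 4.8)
The plan is to derive $P^m_{(n-1)k} > P^m_{nk}$ on the range $1 \le k \le \frac{n}{2}$ by a monotonicity sandwich that pins both rows against their values in the center column $k=\frac{n}{2}$, where the two rows have already been compared head‑to‑head in Proposition~\ref{center_column_max_lastrow}. The decisive structural fact is that on the interval $1 \le k \le \frac{n}{2}$ the two rows run in opposite directions: by Proposition~\ref{rows_decrease} the non‑last row $j=n-1$ is decreasing in $k$, while by Proposition~\ref{last_row_increases} the last row $n$ is increasing in $k$. Hence both rows are controlled throughout the interval by their single common column $k=\frac{n}{2}$, and at that column we already know that row $n-1$ strictly beats row $n$.

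First I would verify that the hypothesis $m > n\log 2n + cn$ is strong enough to invoke all three ingredients at once. Since $n\log 2n = n\log n + n\log 2 > n\log n$, the bound $m > n\log 2n + cn$ implies $m > n\log n + cn$, so the thresholds of Proposition~\ref{last_row_increases} and Proposition~\ref{center_column_max_lastrow} (each requiring $m > n\log n + cn$) are met with the same constant $c$, while Proposition~\ref{rows_decrease} (requiring $m > n\log 2n + cn$) holds by hypothesis; the assumption $n \ge 8$ in the corollary covers the size restrictions of all three. With these in hand I would fix $k$ with $1 \le k \le \frac{n}{2}$ and assemble the chain: Proposition~\ref{rows_decrease} applied to the fixed row $j=n-1$ gives $P^m_{(n-1)k} \ge P^m_{(n-1)\frac{n}{2}}$, Proposition~\ref{last_row_increases} gives $P^m_{n\frac{n}{2}} \ge P^m_{nk}$, and Proposition~\ref{center_column_max_lastrow} supplies the strict middle step, so that
\[
P^m_{(n-1)k} \;\ge\; P^m_{(n-1)\frac{n}{2}} \;>\; P^m_{n\frac{n}{2}} \;\ge\; P^m_{nk},
\]
which yields the asserted strict inequality for every $k$ in the range.

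There is essentially no analytic obstacle remaining: all the delicate estimation of alternating binomial sums has already been discharged inside Propositions~\ref{last_row_increases}, \ref{rows_decrease}, and \ref{center_column_max_lastrow}. The only points that require care are bookkeeping ones — that the two monotonicity directions point the right way so that both endpoints collapse onto $k=\frac{n}{2}$ (the non‑last row \emph{decreasing} and the last row \emph{increasing}), and that the strict inequality at $k=\frac{n}{2}$ survives being chained with the two non‑strict monotonicity bounds, which it does since a strict inequality flanked by $\ge$ on either side remains strict. This is precisely the same sandwich mechanism by which Corollary~\ref{n_max_j} was obtained from its four supporting propositions, so I expect the argument to be short and entirely formal.
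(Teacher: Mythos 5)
Your proof is correct and is essentially the paper's own argument: the paper's proof of Corollary~\ref{n-1_max_n} simply cites Propositions~\ref{last_row_increases}, \ref{rows_decrease}, and \ref{center_column_max_lastrow}, and the sandwich chain $P^m_{(n-1)k} \geq P^m_{(n-1)\frac{n}{2}} > P^m_{n\frac{n}{2}} \geq P^m_{nk}$ you spell out is exactly the intended way to combine them. Your verification that $m > n\log 2n + cn$ implies the weaker thresholds of the other two propositions, and that $n \geq 8$ covers all size restrictions, fills in the bookkeeping the paper leaves implicit.
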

\begin{proof}
This result follows from Proposition~\ref{last_row_increases}, Proposition~\ref{rows_decrease} and Proposition~\ref{center_column_max_lastrow}.
\end{proof}
\begin{prop}\label{decreasing_differences}
  Let $n\geq 8$ be even and $c\geq 0$. For $m > 2n\log n+cn$ and $1\leq j\leq n-2$ fixed, $P^m_{(n-1)k}-P^m_{jk}$ is decreasing in $k$ for $1\leq k\leq \frac{n}{2}$.
\end{prop}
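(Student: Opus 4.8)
\section*{Proof proposal for Proposition~\ref{decreasing_differences}}

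The plan is to prove the equivalent statement that the one-step difference
\[
\Delta_k := \left(P^m_{(n-1)k}-P^m_{jk}\right)-\left(P^m_{(n-1)(k+1)}-P^m_{j(k+1)}\right)
\]
is strictly positive for every $1\le k\le \frac{n}{2}-1$. When $k>j$ the row-$j$ terms cancel, since $P^m_{jk}=P^m_{j(k+1)}$ by Case~2 of Proposition~\ref{rows_decrease}, so $\Delta_k=P^m_{(n-1)k}-P^m_{(n-1)(k+1)}>0$ follows at once from Proposition~\ref{rows_decrease} applied to row $n-1$ (our hypothesis $m>2n\log n+cn$ implies the weaker $m>n\log 2n+cn$ needed there). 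Thus the substance of the argument lies in the range $1\le k\le j$, which I treat by the mechanism used throughout Section~3: write $n^m\Delta_k$ as an alternating sum, show that the magnitudes increase, and read off positivity.

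First I would expand the four entries with Corollary~\ref{pm}. The $k$-independent pieces, namely $\tfrac{1}{n^{m+1}}\sum_{i=2}^{\cdot}(\cdots)$ and the additive $\tfrac1n$, depend only on the row index and hence drop out of $\Delta_k$; only the $k$-dependent second sums survive. Using $\binom{n-i}{(n-1)-i}=n-i$ for row $n-1$ together with the Pascal identity $\binom{n-k}{i-k}=\binom{n-k-1}{i-k}+\binom{n-k-1}{i-k-1}$ to merge the two columns, I expect to reach, valid for all $1\le k\le j$,
\[
n^m\Delta_k=\sum_{i=k}^{n-1}(-1)^{i+1}(i-1)^m\binom{n-k-1}{i-k}\,b_i,\qquad b_i:=(n-i)+(-1)^j\binom{n-i}{j-i},
\]
with the convention $\binom{n-i}{j-i}=0$ for $i>j$; in particular the top term $i=n-1$ equals $(n-2)^m$.

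For $j$ even, $b_i=(n-i)+\binom{n-i}{j-i}>0$, so $a_i:=(i-1)^m\binom{n-k-1}{i-k}b_i\ge 0$ and $n^m\Delta_k=\sum_{i=k}^{n-1}(-1)^{i+1}a_i$. Since $n$ is even the largest term $i=n-1$ carries sign $+$, so it suffices to show that $a_i$ is increasing; pairing from the top then forces the alternating sum to be positive. As in the earlier propositions I would verify
\[
\frac{a_{i+1}}{a_i}=\left(1+\frac{1}{i-1}\right)^m\frac{n-1-i}{i+1-k}\cdot\frac{b_{i+1}}{b_i}>\left(1+\frac{1}{n-1}\right)^m\frac{1}{n^2}
\]
after the routine subrange analysis of $i$ (small $i$, $i$ near $j$, and $j<i\le n-1$) modeled on Proposition~\ref{last_row_max_left}, the binomial ratio being exactly $(n-1-i)/(i+1-k)$. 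Finally $m>2n\log n+cn$ gives $m\log(1+\frac1{n-1})=m\sum_{t\ge1}\frac1t(\frac1n)^t>\frac mn>2\log n+c$, whence $(1+\frac1{n-1})^m\frac1{n^2}>e^c\ge 1$ and $a_i$ is increasing.

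The main obstacle is the case $j$ odd, where $b_i=(n-i)-\binom{n-i}{j-i}$ changes sign: it is negative for the small indices (where $\binom{n-i}{j-i}$ dominates) and positive once $i$ is large, so the coefficients no longer have a uniform sign and the clean top-pairing argument breaks down. This is precisely the difficulty Lemma~\ref{combo-lem-2}(iv) is built for: it guarantees that $b_{i+1}/b_i=\big(\binom{n-i-1}{j-i-1}-(n-i-1)\big)/\big(\binom{n-i}{j-i}-(n-i)\big)$ is monotone in $i$, which locates the single sign change and lets me split the sum into sign-constant blocks on each of which the magnitudes $(i-1)^m\binom{n-k-1}{i-k}|b_i|$ grow geometrically with ratio exceeding $e^c$; the exponentially small wrong-sign small-$i$ contributions are then dominated by the leading positive block, exactly as in the odd case of Proposition~\ref{last_row_max_left}. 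Confirming the ratio bound $>(1+\frac1{n-1})^m n^{-2}$ uniformly across these subranges is the lengthy but routine part of the argument, and once it is in place positivity of $\Delta_k$, and hence the monotonicity claimed, follows in both parities.
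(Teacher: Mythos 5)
Your proposal is correct and follows essentially the same route as the paper's own proof: both expand the four entries via Corollary~\ref{pm}, cancel the $k$-independent parts, reduce to the alternating sum $\sum_{i=k}^{n-1}(-1)^{i+1}(i-1)^m\binom{n-k-1}{i-k}\bigl((n-i)+(-1)^j\binom{n-i}{j-i}\bigr)$ (the paper organizes this as the cases $j>k$, $j=k$, $j<k$ with even/odd subcases), and conclude by showing the term magnitudes grow by a factor exceeding $\left(1+\tfrac{1}{n-1}\right)^m\tfrac{1}{n^2}>e^c$ so that pairing from the top gives positivity. One small correction: your cited precedent for the sign-changing odd case is off --- in Proposition~\ref{last_row_max_left} the odd-$j$ coefficients $\binom{n-i-1}{j-i-1}+1$ never change sign --- whereas here the paper exploits that the $i=j-1$ term vanishes identically (since $\binom{n-j+1}{1}=n-j+1$) and re-indexes the upper block into a single nonnegative increasing alternating sum, which is exactly your block-domination argument with the cross-gap comparison appearing as its Case~1.2.b, and with Lemma~\ref{combo-lem-2}(iv) used to bound ratios inside the negative block rather than to locate the sign change.
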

\begin{proof}
\noindent Case 1: For $j > k$, we have
{\allowdisplaybreaks
\begin{align*}
  P^m_{(n-1)k}&-P^m_{(n-1)(k+1)}+P^m_{j(k+1)}-P^m_{jk} 
  \\
  & = (-1)^{k-1}\left(\frac{k-1}{n}\right)^m(n-k)-(-1)^{j+k}\left(\frac{k-1}{n}\right)^m\binom{n-k}{j-k}
  \\
  &+ \frac{1}{n^m}\left(\sum_{i=k+1}^j(i-1)^m\binom{n-k-1}{i-k}\right.
  \\
 & \times \left((-1)^{j+i+1}\binom{n-i}{j-i}+(-1)^{n-1+i}(n-i)\right)
  \\
  &+ \left.\sum_{i=j+1}^{n-1}(-1)^{n-1+i}(i-1)^m(n-i)\binom{n-k-1}{i-k}\right).
\end{align*}
}%
\noindent Case 1.1: For $j$ even, we have
\[
  P^m_{(n-1)k} - P^m_{(n-1)(k+1)}+P^m_{j(k+1)}-P^m_{jk} =
  \frac{1}{n^m}\sum_{i=k}^{n-1} (-1)^{i-1}a_i,
\]
where $a_i$ is defined as
\begin{equation}\label{eq:rowdiff_jgk_even}
a_i = \begin{cases}
  (i-1)^m\left(\binom{n-i}{j-i}+n-i\right)\binom{n-k-1}{i-k} & k\leq i\leq j,
  \\
  (i-1)^m(n-i)\binom{n-k-1}{i-k}  & j+1 \leq i\leq n-1.
\end{cases}
\end{equation}
\noindent Case 1.2: For $j$ odd, we have
\[
  P^m_{(n-1)k} - P^m_{(n-1)(k+1)}+P^m_{j(k+1)}-P^m_{jk} =
  \frac{1}{n^m}\sum_{i=k}^{n-2}(-1)^i a_i,
\]
where $a_i$ is defined as
\begin{equation}\label{eq:rowdiff_jgk_odd}
a_i = \begin{cases}
  (i-1)^m\left(\binom{n-i}{j-i}-n+i\right)\binom{n-k-1}{i-k} & k\leq i\leq j-2,
  \\
  i^m(n-i-2)\binom{n-k-1}{i+1-k} & i=j-1,
  \\
  i^m(n-i-1)\binom{n-k-1}{i+1-k}  & j\leq i\leq n-2.
\end{cases}
\end{equation}
\noindent Case 2: For $j=k$, we have
{\allowdisplaybreaks
\begin{align*}
  P^m_{(n-1)k} &- P^m_{(n-1)(k+1)}+P^m_{j(k+1)}-P^m_{jk} = \frac{1}{n^m}\left(((-1)^{n-1+k}(n-k)-1)(k-1)^m \right.
  \\
  &+ \left. \sum_{i=k+1}^{n-1}(-1)^{n-1+i}(i-1)^m(n-i)\binom{n-k-1}{i-k}\right).
\end{align*}
}%
\noindent Case 2.1: For $j$ even, we have
\[
  P^m_{(n-1)k} - P^m_{(n-1)(k+1)}+P^m_{j(k+1)}-P^m_{jk} =
  \frac{1}{n^m}\sum_{i=k}^{n-1}(-1)^{i-1} a_i,
\]
where $a_i$ is defined as
\begin{equation}\label{eq:rowdiff_jek_even}
a_i = \begin{cases}
  (i-1)^m(n-i+1) & i=k,
  \\
  (i-1)^m(n-i)\binom{n-k-1}{i-k}  & k+1 \leq i\leq n-1.
\end{cases}
\end{equation}

\noindent Case 2.2: For $j$ odd, we have
\[
  P^m_{(n-1)k} - P^m_{(n-1)(k+1)} + P^m_{j(k+1)} - P^m_{jk}
  = \frac{1}{n^m}\sum_{i=k}^{n-1}(-1)^{i-1} a_i,
\]
where $a_i$ is defined as
\begin{equation}\label{eq:rowdiff_jek_odd}
a_i = \begin{cases}
  (i-1)^m(n-i-1) & i=k,
  \\
  (i-1)^m(n-i)\binom{n-k-1}{i-k}  & k+1 \leq i\leq n-1.
\end{cases}
\end{equation}
\noindent Case 3: For $j < k$, we have
{\allowdisplaybreaks
\begin{align*}
	P^m_{(n-1)k} &- P^m_{(n-1)(k+1)} + P^m_{j(k+1)} - P^m_{jk} 
	\\ 
	& = \frac{1}{n^m}\sum_{i=k}^{n-1}(-1)^{i-1}(i-1)^m(n-i)\binom{n-k-1}{i-k} 
	= \frac{1}{n^m}\sum_{i=k}^{n-1}(-1)^{i-1}a_i,
\end{align*}
}%
where $a_i$ is defined as
\begin{equation}\label{eq:rowdiff_jlk}
a_i=(i-1)^m(n-i)\binom{n-k-1}{i-k}.
\end{equation}

\noindent Using equation (\ref{eq:rowdiff_jgk_even}) we get the following cases for $\frac{a_{i+1}}{a_i}$.

\noindent Case 1.1.a: For $k\leq i\leq j-1$, we have
{\allowdisplaybreaks
\begin{align*}
  \frac{a_{i+1}}{a_i} 
  & = \left(1+\frac{1}{i-1}\right)^m\frac{n-i-1}{i-k+1} \frac{\binom{n-i-1}{j-i-1}+n-i-1}{\binom{n-i}{j-i}+n-i}
  \\
  & > \left(1+\frac{1}{i-1}\right)^m\frac{(n-i-1)(j-i)}{2(i-k+1)(n-i)}
  \\
  & >\left(1+\frac{1}{n-4}\right)^m\frac{1}{3(n-3)} > \left(1+\frac{1}{n-1}\right)^m\frac{1}{n^2}.
\end{align*}
}%
The first inequality holds, since $\frac{\binom{n-i-1}{j-i-1}+n-i-1}{\binom{n-i}{j-i}+n-i} > \frac{\binom{n-i-1}{j-i-1}}{2\binom{n-i}{j-i}} = \frac{j-i}{2(n-i)}$.

\noindent The second inequality holds, since $k\leq i\leq j-1$, $i-k+1\leq j-k$ and, since $j\leq n-2$ and $k\geq 1$,we get $j-k\leq n-3$. Hence, $\frac{j-i}{i-k+1}\geq \frac{1}{n-3}$.

\noindent Case 1.1.b: For $i=j$, we have
{\allowdisplaybreaks
\begin{align*}
  \frac{a_{i+1}}{a_i} & 
  = \left(1+\frac{1}{j-1}\right)^m\frac{n-j-1}{n-j+1}\frac{n-j-1}{j-k+1}\\
  & > \left(1+\frac{1}{n-3}\right)^m\frac{1}{3(n-2)} 
  > \left(1+\frac{1}{n-1}\right)^m\frac{1}{n^2}.
\end{align*}
}%

\noindent Case 1.1.c: For $j+1 \leq i\leq n-2$, we have
{\allowdisplaybreaks
\begin{align*}
  \frac{a_{i+1}}{a_i} & 
  = \left(1+\frac{1}{i-1}\right)^m \frac{n-i-1}{n-i}\frac{n-i-1}{i-k+1}
  \\
  & > \left(1+\frac{1}{n-3}\right)^m\frac{1}{2(n-2)} 
  > \left(1+\frac{1}{n-1}\right)^m\frac{1}{n^2}.
\end{align*}
}%

\noindent Using equation (\ref{eq:rowdiff_jgk_odd}) we get the following cases for $\frac{a_{i+1}}{a_i}$.

\noindent Case 1.2.a: For $k\leq i\leq j-3$, we have
{\allowdisplaybreaks
\begin{align*}
  \frac{a_{i+1}}{a_i} & =
  \left(1+\frac{1}{i-1}\right)^m\frac{n-i-1}{i-k+1}\frac{\binom{n-i-1}{j-i-1}-(n-i-1)}{\binom{n-i}{j-i}-(n-i)}
  \\
	& > \left(1+\frac{1}{j-4}\right)^m\frac{n-j+2}{j-k-2}\frac{3(n-j+2)}{(n-j+3)(n-j+4)}
	\\
  & \geq \left(1+\frac{1}{n-6}\right)^m\frac{8}{5(n-5)} > \left(1+\frac{1}{n-1}\right)^m\frac{1}{n^2}.
\end{align*}
}%
The first inequality holds, since Lemma~\ref{combo-lem-2} part (iv) implies,

$\frac{\binom{n-i-1}{j-i-1}-(n-i-1)}{\binom{n-i}{j-i}-(n-i)}
    \geq \frac{\binom{n-j+2}{2}-(n-j+2)}{\binom{n-i}{3}-(n-j+3)} $.
 \noindent The second inequality holds, since $j\leq n-2$.

\noindent Case 1.2.b: For $i=j-2$, we have
{\allowdisplaybreaks
\begin{align*}
  \frac{a_{i+1}}{a_i} & 
  = \left(1+\frac{2}{j-3}\right)^m\frac{2(n-j+1)(n-j)}{(j-k)(j-k-1)(n-j+2)}
  \\
  & > \left(1+\frac{2}{n-5}\right)^m\frac{3}{(n-3)(n-4)} > \left(1+\frac{1}{n-1}\right)^m\frac{1}{n^2}.
\end{align*}
}%
The first inequality holds, since $j\leq n-2$.

\noindent Case 1.2.c: For $i=j-1$, we have
{\allowdisplaybreaks
\begin{align*}
  \frac{a_{i+1}}{a_i} & 
  = \left(1+\frac{1}{j-1}\right)^m\frac{n-j-1}{j-k+1}
  \\
  & \geq \left(1+\frac{1}{n-3}\right)^m\frac{1}{n-2} > \left(1+\frac{1}{n-1}\right)^m\frac{1}{n^2}.
\end{align*}
}%
The first inequality holds, since $j\leq n-2$ and $k\geq1$.

\noindent Case 1.2.d: For $j \leq i\leq n-3$, the case is identical to case 1.1.3.

\noindent Using equation (\ref{eq:rowdiff_jek_even}) we get the following cases for $\frac{a_{i+1}}{a_i}$.

\noindent Case 2.1.a: For $i=k$, we have
{\allowdisplaybreaks
\begin{align*}
  \frac{a_{i+1}}{a_i}
  & 
  = \left(1+\frac{1}{k-1}\right)^m\frac{(n-k-1)^2}{n-k+1}
  \\
  & > \left(1+\frac{2}{n-2}\right)^m\frac{n^2-4}{2(n+2)} > \left(1+\frac{1}{n-1}\right)^m\frac{1}{n^2}.
\end{align*}
}%
The first inequality holds, since $k\leq \frac{n}{2}$.

\noindent Case 2.1.b: For $k+1 \leq i\leq n-2$, we have

{\allowdisplaybreaks
\begin{align*}
  \frac{a_{i+1}}{a_i} & 
  = \left(1+\frac{1}{i-1}\right)^m\frac{(n-i-1)^2}{(n-i)(i+1-k)}
  \\
  & > \left(1+\frac{1}{n-2}\right)^m\frac{1}{2}\frac{1}{n-1} > \left(1+\frac{1}{n-1}\right)^m\frac{1}{n^2}.
\end{align*}
}%

\noindent Using equation (\ref{eq:rowdiff_jek_odd}) we get the following cases for $\frac{a_{i+1}}{a_i}$.

\noindent Case 2.2.a: For $i=k$, we have
{\allowdisplaybreaks
\begin{align*}
  \frac{a_{i+1}}{a_i} &= \left(1+\frac{1}{k-1}\right)^m(n-k-1)
  \\
  & \geq \left(1+\frac{2}{n-2}\right)^m\frac{n-2}{2} > \left(1+\frac{1}{n-1}\right)^m\frac{1}{n^2}.
\end{align*}
}%
The first inequality holds, since $k\leq \frac{n}{2}$.

\noindent Case 2.2.b: For $k+1 \leq i\leq n-2$, the case is identical to case 2.1.b.

\noindent Using equation (\ref{eq:rowdiff_jlk}) we get the following cases for $\frac{a_{i+1}}{a_i}$.

\noindent Case 3.1: For $k\leq i\leq n-2$, the cases for $j$ even and odd are the same.
{\allowdisplaybreaks
\begin{align*}
  \frac{a_{i+1}}{a_i}
  & 
  = \left(1+\frac{1}{i-1}\right)^m\frac{(n-i-1)^2}{(n-i)(i+1-k)}
  \\
  & > \left(1+\frac{1}{n-2}\right)^m\frac{1}{2(n-1)} > \left(1+\frac{1}{n-1}\right)^m\frac{1}{n^2}.
\end{align*}
}%

\noindent Let $c\geq 0$. For $m > 2n\log n+cn$, we get $\frac{m}{n} > 2 \log n +c $ and since 
$m\log(1+\frac{1}{n-1}) = m\sum_{t=1}^\infty\frac{1}{t}(\frac{1}{n})^t$
this implies $m\log(1+\frac{1}{n-1}) > 2\log n+c$. 
Hence, $\left(1+\frac{1}{n-1}\right)^m\frac{1}{n^2} > e^c$.
In all cases above we show that $\frac{a_{i+1}}{a_i} > \left(1+\frac{1}{n-1}\right)^m\frac{1}{n^2}$. Therefore, $a_i$, defined as in (\ref{eq:rowdiff_jgk_even}) and (\ref{eq:rowdiff_jgk_odd}) for $j > k$,
(\ref{eq:rowdiff_jek_even}) and (\ref{eq:rowdiff_jek_odd}) for $j=k$ and finally (\ref{eq:rowdiff_jlk}) for $j<k$, is increasing 
and hence,
$P^m_{(n-1)k} - P^m_{(n-1)(k+1)} + P^m_{j(k+1)} - P^m_{jk} =\sum_{i=k}^{n-1}(-1)^{i-1}a_i > 0$, for all $m>2n\log n+cn$.

\end{proof}
\begin{prop}\label{center_column_max}
  Let $n\geq 10$ be even and $c\geq 0$. For $m > 3n\log n+cn$, we have $P^m_{(n-1)\frac{n}{2}} > P^m_{j\frac{n}{2}}$ for $1\leq j\leq n-2$.
\end{prop}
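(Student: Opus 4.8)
The plan is to prove this exactly as the earlier center-column and column-$\tfrac n2+1$ comparisons were handled: expand both entries with Corollary~\ref{pm}, subtract, and reduce the difference to a single alternating sum $\frac{1}{n^m}\sum_i(-1)^i a_i$ with $a_i>0$. Since $n-1\ge\tfrac n2$, the entry $P^m_{(n-1)\frac n2}$ always falls in the $k\le j$ branch of Corollary~\ref{pm}, contributing a first sum over $2\le i\le n-1$ with coefficient $\binom{n-i}{n-1-i}\binom{n}{i-1}=(n-i)\binom{n}{i-1}$ and a second sum over $\tfrac n2\le i\le n-1$ with coefficient $(n-i)\binom{n/2}{i-n/2}$. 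Because $n$ is even the signs of these two sums are $(-1)^i$ and $-(-1)^i$, which is what makes the cancellation against $P^m_{j\frac n2}$ clean, just as in Proposition~\ref{center_column_max_lastrow}.

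First I would split on the size of $j$. For $\tfrac n2\le j\le n-2$ the entry $P^m_{j\frac n2}$ also lies in the $k\le j$ branch, so the two first sums overlap for $2\le i\le j$ and the two second sums for $\tfrac n2\le i\le j$; for $1\le j\le\tfrac n2-1$ the entry lies in the $j<k$ branch and carries no second sum. In either case, collecting the $(i-1)^m$ terms yields a piecewise coefficient $a_i$ with three regimes: an ``$i\le j$'' regime carrying the extra factor $\binom{n-i}{j-i}\pm(n-i)$, a ``$j<i\le\tfrac n2-1$'' regime carrying only $\tfrac1n\binom{n}{i-1}$, and an ``$i\ge\tfrac n2$'' regime carrying $\tfrac1n\binom ni-\binom{n/2}{i-n/2}$, exactly the shapes appearing in Lemma~\ref{combo-lem-2}. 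As in Proposition~\ref{last_row_max_left} I would treat $j$ even and $j$ odd separately, since the parity of $j$ flips the sign on $\binom{n-i}{j-i}$ and so turns $\binom{n-i}{j-i}+(n-i)$ into $\binom{n-i}{j-i}-(n-i)$; a possible isolated top term at $i=n-1$ (the analogue of the $+(n-2)^m$ left over in Proposition~\ref{center_column_max_lastrow}) stays outside the sum as a positive addend.

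The engine in every regime is the ratio bound $\frac{a_{i+1}}{a_i}>\left(1+\frac1{n-1}\right)^m\frac1{n^3}$. Inside each regime this reduces to the exponential factor $\left(1+\frac1{i-1}\right)^m$ times an explicit rational function of $i,j,n$, which I bound below using $j\le n-2$ together with the binomial-quotient monotonicity of Lemma~\ref{combo-lem-2} parts (ii) and (iv) to replace the $i$-dependent quotient by its value at the worst endpoint. Since $m>3n\log n+cn$ gives $m\log(1+\frac1{n-1})=m\sum_{t\ge1}\frac1t n^{-t}>\frac mn>3\log n+c$, we get $\left(1+\frac1{n-1}\right)^m n^{-3}>e^c\ge1$, so every ratio exceeds $1$ and $a_i$ is increasing. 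An increasing positive $a_i$ makes the alternating sum positive by pairing consecutive terms exactly as in Proposition~\ref{last_row_max_left}, which gives $P^m_{(n-1)\frac n2}-P^m_{j\frac n2}>0$.

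The main obstacle is the bookkeeping at the boundaries of the piecewise $a_i$ — the crossovers $i=j-1,\,i=j$, the step to $i=\tfrac n2-1$, and the step into $i=\tfrac n2$ — where the functional form of $a_i$ changes and the ratio bound must be checked separately, precisely the special-endpoint cases (the analogues of 1.b, 1.d, 2.b in Proposition~\ref{last_row_max_left}). Securing a single uniform lower bound $\tfrac1{n^3}$ valid simultaneously at all these junctions, rather than a patchwork of thresholds, is what pins the constant at $3n\log n$. Once the proposition is established, combining it with the monotonicity of $P^m_{(n-1)k}-P^m_{jk}$ in $k$ from Proposition~\ref{decreasing_differences} upgrades this single-column inequality at $k=\tfrac n2$ to all $1\le k\le\tfrac n2$, the form needed for the main theorem.
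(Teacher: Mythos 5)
Your proposal matches the paper's own proof in all essentials: the paper likewise splits on $j<\frac{n}{2}$, $j=\frac{n}{2}$, and $j>\frac{n}{2}$, writes $P^m_{(n-1)\frac{n}{2}}-P^m_{j\frac{n}{2}}$ as an alternating sum whose coefficients have exactly the three shapes you describe (including the isolated positive $\frac{(n-2)^m}{2}$ term in the $j=\frac{n}{2}$ case), and then handles $j$ even and odd separately, using Lemma~\ref{combo-lem-2} and ratio bounds of the form $\frac{a_{i+1}}{a_i}>\left(1+\frac{1}{n-1}\right)^m\frac{1}{n^3}>e^c$ to conclude the terms are increasing and the alternating sum is positive. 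This is the same decomposition, the same engine, and the same source of the $3n\log n$ threshold, so the proposal is correct and essentially identical to the paper's argument.
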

\begin{proof}
\noindent Case 1: Let $j < \frac{n}{2}$. 
{\allowdisplaybreaks
\begin{align*}
  P_{(n-1)\frac{n}{2}}-P_{j\frac{n}{2}}
  &= \frac{1}{n^m}\left(\sum_{i=2}^j(-1)^i(i-1)^m \frac{1}{n}\binom{n}{i-1}\left(n-i+(-1)^j\binom{n-i}{j-i}\right)\right.
  \\
  &+ \sum_{i=j+1}^{n/2-1}(-1)^i(i-1)^m\frac{n-i}{n}\binom{n}{i-1}
  \\
  &+ \left. \sum_{i=n/2}^{n-1}(-1)^i(i-1)^m(n-i)\left(\frac{1}{n}\binom{n}{i-1}-\binom{n/2}{i-n/2}\right)\right).
\end{align*}
}%
\noindent Case 2: Let $j = \frac{n}{2}$. 
{\allowdisplaybreaks
\begin{align*}
 & P^m_{(n-1)\frac{n}{2}} - P^m_{\frac{n}{2}\frac{n}{2}} 
  \\
  & = \frac{1}{n^m}\left(\sum_{i=2}^{n/2-1}(-1)^{i+1}(i-1)^m \frac{1}{n}\binom{n}{i-1}\left((-1)^{(n/2+1)}\binom{n-i}{n/2-i}-(n-i)\right)\right.
  \\
  & + (-1)^{n/2}\left(\frac{n}{2}-1\right)^m (\frac{n}{2}+(-1)^{n/2})\left(\frac{1}{n}\binom{n}{n/2-1}-1\right)
  \\
  & + \left.\sum_{i=n/2}^{n-3}(-1)^{i+1}i^m(n-(i+1))\left(\frac{1}{n}\binom{n}{i}-\binom{n/2}{i+1-n/2}\right)+\frac{(n-2)^m}{2}\right)
  \\
  & = \frac{1}{n^m}\left(\sum_{i=2}^{n-3}(-1)^{i+1}a_i+\frac{(n-2)^m}{2}\right).
\end{align*}
}%
\noindent Case 3: Let $j > \frac{n}{2}$. 
{\allowdisplaybreaks
\begin{align*}
  & P^m_{(n-1)\frac{n}{2}} - P^m_{j\frac{n}{2}}
  \\
  & = \frac{1}{n^m} \left(\sum_{i=2}^{n/2-1}(-1)^i(i-1)^m \frac{1}{n}\binom{n}{i-1}\left(n-i+(-1)^j\binom{n-i}{j-i}\right)\right.
  \\
  &+ \sum_{i=n/2}^j(-1)^i(i-1)^m \left( \frac{1}{n}\binom{n}{i-1}-\binom{n/2}{i-n/2} \right) \left(n-i+(-1)^j\binom{n-i}{j-i} \right)
  \\
  &+ \left.\sum_{i=j+1}^{n-1}(-1)^i(i-1)^m(n-i) \left(\frac{1}{n}\binom{n}{i-1}-\binom{n/2}{i-n/2} \right)\right).
\end{align*}
}%
Parallel to the proof of Proposition~\ref{decreasing_differences} we consider both even and odd subcases to prove that $P^m_{(n-1)\frac{n}{2}} - P^m_{j\frac{n}{2}}> 0$ for $m > 3n\log n+cn$ and $c \geq 0$.
\end{proof}
\begin{cor}\label{n-1_max_j}
  Let $n\geq 10$ be even and $c\geq 0$. For $m > 3n\log n+cn$ and $1\leq j\leq n-2$ fixed, we have $P^m_{(n-1)k}\geq P^m_{jk}$ for $1\leq k\leq \frac{n}{2}$.
\end{cor}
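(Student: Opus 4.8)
The plan is to obtain the corollary as a purely structural consequence of the two preceding results, without any further analytic estimates. Define the row difference $D_k := P^m_{(n-1)k} - P^m_{jk}$ for $1 \leq k \leq \frac{n}{2}$, with $j$ fixed in the range $1 \leq j \leq n-2$. The hypothesis $m > 3n\log n + cn$ exceeds both thresholds invoked below, since the weaker requirement $2n\log n + cn$ of Proposition~\ref{decreasing_differences} is dominated by $3n\log n + cn$ for every $c \geq 0$ and $n \geq 2$, so both cited propositions apply for this $j$.

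First I would invoke Proposition~\ref{decreasing_differences}, which asserts that $D_k$ is decreasing in $k$ on $1 \leq k \leq \frac{n}{2}$; that is, $D_1 \geq D_2 \geq \cdots \geq D_{n/2}$. Consequently the minimum of $D_k$ over this range is attained at the right endpoint $k = \frac{n}{2}$. Next I would apply Proposition~\ref{center_column_max}, which supplies the positivity of that endpoint value, namely $D_{n/2} = P^m_{(n-1)\frac{n}{2}} - P^m_{j\frac{n}{2}} > 0$. Chaining the two facts gives $D_k \geq D_{n/2} > 0$ for every $1 \leq k \leq \frac{n}{2}$, hence $P^m_{(n-1)k} \geq P^m_{jk}$ throughout the asserted range.

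I do not expect a genuine obstacle here: all of the hard work—the delicate ratio bounds $\frac{a_{i+1}}{a_i} > (1+\frac{1}{n-1})^m n^{-2}$ and $n^{-3}$ driving the alternating-sum positivity—has already been carried out inside Proposition~\ref{decreasing_differences} and Proposition~\ref{center_column_max}, so the corollary reduces to a two-line assembly. The only point demanding a moment's care is the orientation of the monotonicity: because the difference \emph{decreases} in $k$, it is the \emph{largest} column index $k = \frac{n}{2}$, rather than $k=1$, that controls the infimum, and it is precisely at that column that Proposition~\ref{center_column_max} certifies positivity. Once that alignment is noted, the conclusion $P^m_{(n-1)k} \geq P^m_{jk}$ is immediate.
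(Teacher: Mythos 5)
Your proposal is correct and matches the paper's own proof, which derives the corollary exactly from Proposition~\ref{decreasing_differences} and Proposition~\ref{center_column_max}; you have merely spelled out the chaining $D_k \geq D_{n/2} > 0$ that the paper leaves implicit. The observation that the monotonicity forces the minimum to sit at $k = \frac{n}{2}$, precisely where positivity is certified, is the right (and only) point of care.
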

\begin{proof}
The proof follows from Proposition~\ref{decreasing_differences} and Proposition~\ref{center_column_max}.
\end{proof}

\begin{thm}\label{best_strategy}
Let $n\geq 10$ be even and $c\geq 0$. For $m>4n\log n+cn$, the best no feedback guessing strategy is to guess card $n-1$ for positions 1 through $n/2$ and card $n$ for
positions $n/2+1$ through $n$.
\end{thm}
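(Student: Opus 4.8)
The plan is to reduce the optimization over guessing strategies to a column-by-column maximization of the entries of $P^m$, and then to read off the answer from the three corollaries already established. First I would set up the objective. Since the guesser receives no feedback, the card named at any position cannot depend on the outcomes at the other positions, and there is no reason to require the named cards to be distinct; a strategy is therefore simply a function $g$ assigning to each position $k$ a guessed card $g(k)$. The entry $P^m_{jk}$ is precisely the probability that card $j$ occupies position $k$ after $m$ shuffles, so by linearity of expectation the expected number of correct guesses equals $\sum_{k=1}^n P^m_{g(k)\,k}$. The $k$-th term of this sum depends on the strategy only through the single value $g(k)$, so the whole sum is maximized by choosing, for each column $k$ separately, a row index $g(k)$ that maximizes $P^m_{jk}$ over $1\le j\le n$. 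Thus the theorem is equivalent to identifying the column maxima of $P^m$: row $n$ for the right half of the columns and row $n-1$ for the left half.

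Next I would fix $m>4n\log n+cn$ and observe that this threshold dominates the weaker ones needed below, since $4n\log n+cn$ exceeds both $3n\log n+cn$ and $n\log 2n+cn$ for every even $n\ge 10$; consequently Corollaries~\ref{n_max_j}, \ref{n-1_max_j} and \ref{n-1_max_n} are all in force. For the right half I would invoke Corollary~\ref{n_max_j}, which gives $P^m_{nk}>P^m_{jk}$ for every $1\le j\le n-1$ whenever $\tfrac{n}{2}+1\le k\le n$; this shows that card $n$ is the unique column maximizer there, so the best guess for positions $\tfrac{n}{2}+1$ through $n$ is card $n$.

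For the left half the argument needs two inputs, because the candidate row $n-1$ must be compared both against all of rows $1,\dots,n-2$ and against row $n$. I would use Corollary~\ref{n-1_max_j} to dispatch the rows $1\le j\le n-2$ (giving $P^m_{(n-1)k}\ge P^m_{jk}$) and Corollary~\ref{n-1_max_n} to settle the remaining comparison with row $n$ (giving the strict inequality $P^m_{(n-1)k}>P^m_{nk}$), both valid for $1\le k\le\tfrac{n}{2}$. Together these establish $P^m_{(n-1)k}\ge P^m_{jk}$ for every $1\le j\le n$, so card $n-1$ attains the column maximum and is an optimal guess for positions $1$ through $\tfrac{n}{2}$. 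Assembling the two halves yields the stated strategy.

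The genuine difficulty has already been absorbed into the preparatory results: the delicate binomial estimates of Lemmas~\ref{combo-lem-1} and \ref{combo-lem-2} and the alternating-sum ratio arguments of Propositions~\ref{last_row_increases}--\ref{center_column_max} are what actually pin down these column maxima. At the level of the theorem itself the only point requiring care is completeness, namely ensuring that for each column the chosen row has been compared with every competing row; the easy-to-overlook comparison is the one between the two ``winning'' rows $n-1$ and $n$ on the left half, which is exactly why Corollary~\ref{n-1_max_n} is needed in addition to Corollary~\ref{n-1_max_j}.
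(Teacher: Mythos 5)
Your proposal is correct and takes essentially the same route as the paper: the paper's entire proof is the one-line citation of Corollaries~\ref{n_max_j}, \ref{n-1_max_n}, and \ref{n-1_max_j}, applied exactly as you apply them. Your additional material (the reduction of no-feedback strategies to column-by-column maximization of $P^m$ via linearity of expectation, and the check that $4n\log n+cn$ dominates the thresholds $3n\log n+cn$ and $n\log 2n+cn$) only makes explicit what the paper leaves implicit.
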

\begin{proof}
The proof follows from Corollaries~\ref{n_max_j}, \ref{n-1_max_n}, and \ref{n-1_max_j}.
\end{proof}
\section{Convergence To Uniformity}
In this section we use Theorem \ref{best_strategy} to determine the number of top to random shuffles needed for a deck of cards to become uniformly distributed,
i.e., all orderings are equally likely. Here, the distance used to measure the difference between a top to random shuffled deck and a uniform deck is given by the difference of the average number of correct guesses using the best no feedback guessing strategy and that of the uniform distribution. Note that without feedback the expected number of correct guesses for a uniformly distributed deck using any strategy is $1$. 
\begin{thm}\label{expectation}
Let $n\geq 10$ be even, $c\geq 0$ and $m > 4n\log n+cn$. Further, let $E^m(n)$ denote the expected number of correct guesses using the best no feedback guessing strategy for a deck of $n$ cards after $m$ top to random shuffles.
Then $E^m(n)-1 \leq e^{-c}$.
\end{thm}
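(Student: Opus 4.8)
The plan is to write the expected score as a sum of the relevant entries of $P^m$, subtract the uniform value, and control the resulting deviation by the dominant eigenvalue contribution $(\tfrac{n-1}{n})^m$. By Theorem~\ref{best_strategy} and linearity of expectation, a correct guess at position $k$ occurs with probability equal to the probability that the guessed card occupies position $k$, so
\[
E^m(n) = \sum_{k=1}^{n/2} P^m_{(n-1)k} + \sum_{k=n/2+1}^{n} P^m_{nk}.
\]
Since a uniformly random deck has $P^m_{jk} = \tfrac{1}{n}$ for all $j,k$ and there are $n$ positions, the uniform strategy scores exactly $1$; hence
\[
E^m(n) - 1 = \sum_{k=1}^{n/2}\left(P^m_{(n-1)k} - \tfrac{1}{n}\right) + \sum_{k=n/2+1}^{n}\left(P^m_{nk} - \tfrac{1}{n}\right),
\]
and it suffices to bound the absolute values of these $n$ correction terms.

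First I would substitute $j=n-1$ and $j=n$ into Corollary~\ref{pm}. Using $\binom{n-i}{n-i}=1$ and $\binom{n-i}{n-1-i}=n-i$, each correction $P^m_{jk}-\tfrac1n$ reduces to an explicit alternating sum over $i$ of terms $(i-1)^m$ times a polynomially bounded binomial factor, divided by $n^m$ or $n^{m+1}$. The key observation, already established in the monotonicity arguments of Propositions~\ref{last_row_increases}, \ref{rows_decrease}, \ref{last_row_max_left} and \ref{center_column_max_lastrow}, is that for $m$ of the stated order the coefficient sequences $a_i$ in these sums are increasing in $i$. For an increasing nonnegative sequence one has the elementary bound $\bigl|\sum_i (-1)^i a_i\bigr| \le \max_i a_i$, so each correction is controlled by its top term, at $i=n$ for card $n$ and $i=n-1$ for card $n-1$. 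This gives $|P^m_{nk}-\tfrac1n|\le C(\tfrac{n-1}{n})^m$ and $|P^m_{(n-1)k}-\tfrac1n|\le Cn(\tfrac{n-2}{n})^m$ for an absolute constant $C$; the geometric decay of $(i-1)^m$ as $i$ decreases (consecutive ratio at most $(\tfrac{n-2}{n-1})^m$) swamps the binomial growth $\binom{n}{i-1}$ precisely because $m\gg n$.

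Summing over the $n$ positions then yields $E^m(n)-1 \le \mathrm{poly}(n)\,(\tfrac{n-1}{n})^m$ with a polynomial of degree at most $4$; in fact the card-$n$ contribution is $O\!\left(n(\tfrac{n-1}{n})^m\right)$ and the card-$(n-1)$ contribution is $O\!\left(n^2(\tfrac{n-2}{n})^m\right)\le O\!\left(n^2(\tfrac{n-1}{n})^m\right)$. To finish I would invoke the same logarithmic estimate used throughout: since $\log(1+\tfrac{1}{n-1}) = \sum_{t\ge 1}\tfrac1t(\tfrac1n)^t > \tfrac1n$, the hypothesis $m>4n\log n+cn$ gives $m\log(1+\tfrac{1}{n-1}) > 4\log n + c$, whence $(\tfrac{n-1}{n})^m < n^{-4}e^{-c}$. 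Multiplying by the degree-$\le 4$ polynomial factor leaves a total bounded by $e^{-c}$ (indeed by $(n^{-3}+n^{-2})e^{-c}$, with ample slack). The main obstacle is the bookkeeping in this last step: one must check that the polynomial factors generated by the binomial coefficients, once accumulated over both the sum on $i$ and the sum on $k$, never exceed $n^4$, so that they are absorbed by the $n^{-4}$ gained from the eigenvalue ratio. This is exactly what pins the constant at $4$ in the threshold $4n\log n$.
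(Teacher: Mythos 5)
Your proof is correct, but it takes a genuinely different route from the paper's. The paper first invokes the row-monotonicity results (Propositions~\ref{rows_decrease} and~\ref{last_row_increases}) to bound the score by $\frac{n}{2}P^m_{(n-1)1}+\frac{n}{2}P^m_{nn}$, then expands these two extreme entries via Corollary~\ref{pm}, discards subsums shown to be negative by recognizing them as Stirling-number expressions, and evaluates what remains using the identity $\sum_{k=0}^m S(m,k)(x)_k=x^m$, arriving at $n\left(1-\frac{1}{n}\right)^m<ne^{-m/n}\leq e^{-c}$. You instead subtract $\frac{1}{n}$ from every entry and bound each of the $n$ deviations $\left|P^m_{jk}-\frac{1}{n}\right|$, $j\in\{n-1,n\}$, by the top term of its alternating sum, using the elementary fact that an alternating sum of nonnegative increasing terms is bounded in absolute value by its largest term. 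Your route is more self-contained (no Stirling numbers) and in effect proves the stronger entrywise statement $\left|P^m_{jk}-\frac{1}{n}\right|\leq C n\left(1-\frac{1}{n}\right)^m$ for the two relevant rows, which is an $L^\infty$-type mixing bound; the paper's route leans on machinery already proved and gets an exact evaluation of the dominant terms. Both arguments close with the same estimate $m\log\left(1+\frac{1}{n-1}\right)>4\log n+c$, i.e.\ $\left(1-\frac{1}{n}\right)^m<n^{-4}e^{-c}$. One caution: the monotonicity you need is not literally ``already established'' in Propositions~\ref{last_row_increases}, \ref{rows_decrease}, \ref{last_row_max_left} and~\ref{center_column_max_lastrow} --- those propositions prove that the coefficient sequences of \emph{differences} of entries (such as $P^m_{n(k+1)}-P^m_{nk}$) are increasing, not the sequences $(i-1)^m\binom{n}{i-1}$, $(i-1)^m\binom{n-k}{i-k}$, $(i-1)^m(n-i)\binom{n}{i-1}$, $(i-1)^m(n-i)\binom{n-k}{i-k}$ appearing in your raw deviations. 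You should therefore display the one-line ratio computation for your sequences, e.g.\ $\frac{a_{i+1}}{a_i}\geq\left(1+\frac{1}{n-1}\right)^m\frac{1}{n^2}>1$, which under $m>4n\log n+cn$ holds with ample room; with that added, your bookkeeping (contribution $O\!\left(n\left(1-\frac{1}{n}\right)^m\right)$ from row $n$ and $O\!\left(n^2\left(1-\frac{2}{n}\right)^m\right)$ from row $n-1$, absorbed by $n^{-4}e^{-c}$) is sound.
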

\begin{proof}

By Theorem~\ref{best_strategy}, $E^m(n)-1 = \sum_{i=1}^{n/2} P^m_{(n-1)i}+ \sum_{i=n/2+1}^nP^m_{ni}-1$ for all $m>4n\log n+cn$.
{\allowdisplaybreaks
\begin{align*}
  & \sum_{i=1}^{n/2} P^m_{(n-1)i} + \sum_{i=n/2+1}^nP^m_{ni}-1
  \leq \frac{n}{2}P^m_{(n-1)1} + \frac{n}{2}P^m_{nn}-1
  \\
  & < \frac{n}{2} \left( \frac{1}{n^m}\sum_{i=1}^{n-1} {(-1)^{n-1+i}(i-1)^m (n-i)\binom{n-1}{i-1}} \right.
  \\
  & \qquad + \left. \frac{1}{n^{m+1}}\sum_{i=2}^{n-1}(-1)^{n+1+i}(i-1)^m \binom{n}{i-1} + \frac{2}{n}\right) -1
  \\
  & = \frac{1}{2n^{m-1}} \left((n-1)!S(m,n-2) + (n-1)!S(m,n) + (n-1)^m - n^{m-1} \right)
  \\
  & < \frac{1}{2n^{m-1}} \left(\sum_{k=0}^m S(m,k)(n-1)_k +\frac{1}{n}\sum_{k=0}^m S(m,k)(n)_k+ (n-1)^m - n^{m-1} \right)
  \\
  & = \frac{1}{2n^{m-1}} \left((n-1)^m + \frac{n^m}{n} + (n-1)^m - n^{m-1} \right)
  \\
  & = n \left( 1-\frac{1}{n} \right)^m < ne^{-\frac{1}{n}m} < ne^{-\frac{1}{n}(4n\ln(n)+cn)} \leq e^{-c}.
\end{align*}
}%
The first inequality holds by Proposition~\ref{last_row_increases} and Proposition~\ref{rows_decrease}.
The second inequality holds, since the first term in $P^m_{(n-1)1}$, namely $ \sum_{i=2}^{n-1} (-1)^{n+i} (i-1)^m(n-i)\binom{n}{i-1} = n! \left( S(m,n-1)+S(m,n)
\right) - n^m < 0$, where $S(m,n)$ is the Stirling number of the second kind. Similarly, the second term in $P^m_{nn}$, namely $\sum_{i=1}^{n-1}(-1)^{n+i}(i-1)^m \binom{n-1}{i-1} = (n-1)! S(m,n-1)- (n-1)^m = (n-1)! S(m,n-1) - \sum_{k=0}^m S(m,k)(n-1)_k < 0$.
\end{proof}
\begin{rem}
Theorem \ref{expectation} indicates that $4n\log n+cn$ shuffles are enough to mix the deck. This is in line with the results of \cite{aldous86} as well as \cite{diaconis92}. 
\end{rem}

\section*{Acknowledgements}
I would like to thank Jason Fulman for helpful discussions and Daniel Panario for his useful comments. 

\bibliographystyle{apalike}
\bibliography{pehlivan-nfb-card-guessing-new}
\hfill Carleton University

\hfill School of Mathematics and Statistics

\hfill Ottawa, ON Canada K1S 5B6

\hfill pehlivan@math.carleton.edu

\end{document}